\theoremstyle{plain}
\newtheorem{theorem}{Theorem}[section]
\newtheorem{proposition}[theorem]{Proposition}
\newtheorem{lemma}[theorem]{Lemma}
\theoremstyle{definition}
\newtheorem{definition}[theorem]{Definition}
\theoremstyle{remark}
\newtheorem{remark}[theorem]{Remark}
\newcommand{\tr}{\mathop{\mathrm{tr}}}
\newcommand{\adj}{\mathop{\mathrm{adj}}}
\begin{document}

\title{%
Determination of accessory parameters
in a system of the Okubo normal form}

\author{Toshiaki Yokoyama\thanks{%
  Faculty of Engineering,
  Chiba Institute of Technology,
  Narashino,
  Chiba 275-0023, 
  Japan
  }}

\date{}

\maketitle

\begin{abstract}
A system of differential equations
of the Okubo normal form 
containing accessory parameters is considered.
A condition for determining 
special values of the accessory 
parameters is given. It is 
shown that the special values 
give the differential equation satisfied 
by a product of the
Gauss hypergeometric functions.

\medskip

{\sc Keywords} \
Okubo normal form;
accessory parameters;
local solutions;
system of difference equations;
Gauss hypergeometric function

\medskip

{\sc AMS Classification} \
34M35;
39A45;
33C05

\end{abstract}

\section{Introduction}
We are concerned with
the system of differential equations satisfied by
a product of the Gauss hypergeometric functions.
Set
\begin{equation*}
w=\begin{pmatrix}
 f_{1}f_{2} \\ x{f_{1}}'f_{2} \\ x f_{1}{f_{2}}' \\ x^2{f_{1}}'{f_{2}}'
 \end{pmatrix},
\quad
f_{j}={}_{2}F_{1}(\alpha_{j},\beta_{j},\gamma_{j};x),
\quad
{f_{j}}'=\dfrac{df_{j}}{dx}
\quad
(j=1,2).
\end{equation*}
The function $f_{j}$ satisfies the differential equation
\begin{equation*}
{f_{j}}''-p_{j}(x){f_{j}}'-q_{j}(x)f_{j}=0,
\end{equation*}
where
\begin{equation*}
p_{j}(x)=-\frac{\gamma_{j}}{x}+\frac{\gamma_{j}-1-\alpha_{j}-\beta_{j}}{x-1},
\quad
q_{j}(x)=-\frac{\alpha_{j}\beta_{j}}{x(x-1)},
\end{equation*}
and hence $w$ satisfies the system of differential equations
\begin{equation*}
\frac{dw}{dx}
=
\begin{pmatrix}
 0 & \frac{1}{x} & \frac{1}{x} & 0 \\[\jot]
 x q_{1}(x) & \frac{1}{x}+p_{1}(x) & 0 & \frac{1}{x} \\[\jot]
 x q_{2}(x) & 0 & \frac{1}{x}+p_{2}(x) & \frac{1}{x} \\[\jot]
 0 & x q_{2}(x) & x q_{1}(x) & \frac{2}{x}+p_{1}(x)+p_{2}(x) \\
\end{pmatrix}
w,
\end{equation*}
which is a Fuchsian system of normal form,
namely,
\begin{equation}
\frac{dw}{dx}
=
\left(
\frac{1}{x}H_{0}
+
\frac{1}{x-1}H_{1}
\right)w
\label{sys:prod}
\end{equation}
with
\begin{align*}
H_{0}
&=
\begin{pmatrix}
0 & 1 & 1 & 0 \\
0 & 1-\gamma_{1} & 0 & 1 \\
0 & 0 & 1-\gamma_{2} & 1 \\
0 & 0 & 0 & 2-\gamma_{1}-\gamma_{2} \\
\end{pmatrix},
\\
H_{1}
&=
\begin{pmatrix}
0 & 0 & 0 & 0 \\
-\alpha_{1}\beta_{1} & \gamma_{1}-1-\alpha_{1}-\beta_{1} & 0 & 0  \\
-\alpha_{2}\beta_{2} & 0 
  & \hbox to 35pt{\hss$\gamma_{2}-1-\alpha_{2}-\beta_{2}$\hss} & 0  \\
0 & -\alpha_{2}\beta_{2} & -\alpha_{1}\beta_{1}
  & \gamma_{1}+\gamma_{2}-2-\alpha_{1}-\alpha_{2}-\beta_{1}-\beta_{2} \\
\end{pmatrix}.
\end{align*}
We denote by $I_{m}$ the $m\times m$ identity matrix.
Since 
the characteristic polynomial $\varphi(t)=\det(tI_{4}+H_{0}+H_{1})$
of the residue matrix $-(H_{0}+H_{1})$ at $x=\infty$
is factored into
\begin{equation*}
\varphi(t)
=(t-\alpha_{1}-\alpha_{2})
 (t-\beta_{1}-\beta_{2})
 (t-\alpha_{1}-\beta_{2})
 (t-\beta_{1}-\alpha_{2}),
\end{equation*}
the Riemann scheme of \eqref{sys:prod} is
\begin{equation*}
\left\{ \ \begin{matrix}
  x=0 & x=1 & x=\infty  \\
  0            & 0                                 & \alpha_{1}+\alpha_{2}  \\
  1-\gamma_{1} & \gamma_{1}-1-\alpha_{1}-\beta_{1} & \beta_{1}+\beta_{2}  \\
  1-\gamma_{2} & \gamma_{2}-1-\alpha_{2}-\beta_{2} & \alpha_{1}+\beta_{2}  \\
  2-\gamma_{1}-\gamma_{2} 
   & \ \gamma_{1}+\gamma_{2}-2-\alpha_{1}-\alpha_{2}-\beta_{1}-\beta_{2} \
                                                   & \beta_{1}+\alpha_{2}  \\
   \end{matrix} \ \right\}
\end{equation*}
and the spectral type of \eqref{sys:prod} is $((1111),(1111),(1111))$
if the parameters 
$\alpha_{j}$, $\beta_{j}$, $\gamma_{j}$ ($j=1,2$)
are generic.

In this paper we consider a case of repeated local exponents.
We suppose that
\begin{equation*}
1-\gamma_{2}=1-\gamma_{1}
\quad \mbox{and} \quad
\gamma_{1}+\gamma_{2}-2-\alpha_{1}-\alpha_{2}-\beta_{1}-\beta_{2}=0, 
\end{equation*}
or equivalently
\begin{equation}
\gamma_{j}=\frac{1}{2}(\alpha_{1}+\alpha_{2}+\beta_{1}+\beta_{2})+1
\quad (j=1,2).
\label{cond:Okubo}
\end{equation}
In this case the system \eqref{sys:prod},
the spectral type of which is $((211),(211),(1111))$,
can be transformed into what is called a system of the Okubo normal form.  
We denote by $\tilde{H}_{0}$, $\tilde{H}_{1}$
the residue matrices $H_{0}$, $H_{1}$ with $\gamma_{j}$ ($j=1,2$)
replaced by the right member
of \eqref{cond:Okubo}.
Introducing the notation 
\begin{equation*}
\lambda_{\iota_{1}\iota_{2}\iota_{3}\iota_{4}}
=\frac{1}{2}\{(\iota_{1}\alpha_{1})+(\iota_{2}\alpha_{2})
             +(\iota_{3}\beta_{1}) +(\iota_{4}\beta_{2})\}
, \quad \iota_{j}={+}, \, {-} \ \mbox{or} \ 0,
\end{equation*}
we can write the matrices $\tilde{H}_{0}$, $\tilde{H}_{1}$ 
in the form
\begin{align*}
\tilde{H}_{0}
&=
\lambda_{----}I_{4}
+
\begin{pmatrix}
\lambda_{++++} & 1 & 1 & 0 \\
0 & 0 & 0 & 1 \\
0 & 0 & 0 & 1 \\
0 & 0 & 0 & \lambda_{----} \\
\end{pmatrix},
\\
\tilde{H}_{1}
&=
\begin{pmatrix}
0 & 0 & 0 & 0 \\
-4\lambda_{+000}\lambda_{00+0}  &  \lambda_{-+-+}  &  0  &  0  \\
-4\lambda_{0+00}\lambda_{000+}  &  0  &  \lambda_{+-+-}  &  0  \\
0 & -4\lambda_{0+00}\lambda_{000+} & -4\lambda_{+000}\lambda_{00+0} & 0 \\
\end{pmatrix}.
\end{align*}
Here note that
$\lambda_{\bar{\iota}_{1}\bar{\iota}_{2}\bar{\iota}_{3}\bar{\iota}_{4}}
 =-\lambda_{\iota_{1}\iota_{2}\iota_{3}\iota_{4}}$
holds for $\bar{\iota}_{j}=-\iota_{j}$.
We change the variable $w$ to $u$
by
\begin{equation}
w=x^{\lambda_{----}}Pu,
\quad
P=
\begin{pmatrix}
1 & 1 & 0 & 0\\
0 & \lambda_{----} & \lambda_{+-+-} & 0             \\
0 & \lambda_{----} & 0             & \lambda_{-+-+} \\
0 & {\lambda_{----}}^2 
  & 4\lambda_{0+00}\lambda_{000+} & 4\lambda_{+000}\lambda_{00+0}
\end{pmatrix}
\label{trans:w2u}
\end{equation}
under the condition 
$\det P=\lambda_{----}\lambda_{++--}\lambda_{+-+-}\lambda_{+--+}\ne0$.
The inverse matrix of $P$ is
\begin{equation*}
P^{-1}
=
\frac{1}{\lambda_{++--}\lambda_{+--+}}
\begin{pmatrix}
\lambda_{++--}\lambda_{+--+} 
  & \frac{4\lambda_{0+00}\lambda_{000+}}{\lambda_{++++}}
  & \frac{4\lambda_{+000}\lambda_{00+0}}{\lambda_{----}}
  & \frac{\lambda_{+-+-}}{\lambda_{----}}
\\[\jot]
0 & \frac{4\lambda_{0+00}\lambda_{000+}}{\lambda_{----}}
  & \frac{4\lambda_{+000}\lambda_{00+0}}{\lambda_{++++}}
  & \frac{\lambda_{-+-+}}{\lambda_{----}}
\\[\jot]
0 & \frac{\lambda_{-+++}\lambda_{++-+}}{\lambda_{-+-+}}
  & \frac{4\lambda_{+000}\lambda_{00+0}}{\lambda_{+-+-}}
  & 1
\\[\jot]
0 & \frac{4\lambda_{0+00}\lambda_{000+}}{\lambda_{+-+-}}
  & \frac{\lambda_{+-++}\lambda_{+++-}}{\lambda_{-+-+}}
  & -1
\end{pmatrix}
\end{equation*}
that consists of
left eigenvectors of $\tilde{H}_{1}$ and $\tilde{H}_{0}-\lambda_{----}I_{4}$
with respect to the eigenvalue $0$.
Then we have
\begin{align*}
P^{-1}\bigl(\tilde{H}_{0}-\lambda_{----}I_{4}\bigr)P
&=
\begin{pmatrix}
  \lambda_{++++} & 0
   & \frac{\lambda_{+-++}\lambda_{+++-}}{\lambda_{++++}}
   & \frac{\lambda_{-+++}\lambda_{++-+}}{\lambda_{++++}}
  \\[\jot]
  0 & \lambda_{----}
   & \frac{4\lambda_{0+00}\lambda_{000+}}{\lambda_{----}}
   & \frac{4\lambda_{+000}\lambda_{00+0}}{\lambda_{----}}
  \\
  0 & 0 & 0 & 0
  \\
  0 & 0 & 0 & 0
\end{pmatrix},
\\
P^{-1}\tilde{H}_{1}P
&=
\begin{pmatrix}
  0 & 0 & 0 & 0
  \\
  0 & 0 & 0 & 0
  \\
  \frac{4\lambda_{+000}\lambda_{00+0}}{\lambda_{-+-+}}
  & \frac{\lambda_{-+++}\lambda_{++-+}}{\lambda_{-+-+}}
  & \lambda_{-+-+} & 0
  \\[\jot]
  \frac{4\lambda_{0+00}\lambda_{000+}}{\lambda_{+-+-}}
  & \frac{\lambda_{+-++}\lambda_{+++-}}{\lambda_{+-+-}}
  & 0 & \lambda_{+-+-}
\end{pmatrix}.
\end{align*}
Thus the resulting system 
\begin{equation*}
\frac{du}{dx}
=
\left(
\frac{1}{x}P^{-1}\bigl(\tilde{H}_{0}-\lambda_{----}I_{4}\bigr)P
+
\frac{1}{x-1}P^{-1}\tilde{H}_{1}P
\right)u
\end{equation*}
by the transformation \eqref{trans:w2u}
is written in the Okubo normal form 
\begin{equation}
\left(
x I_{4}
-
\begin{pmatrix}
  0I_{2} &       \\
    & 1I_{2}     \\
\end{pmatrix}
\right)
\frac{du}{dx}
=
A_{0} u,
\label{sys:zero}
\end{equation}
where the coefficient matrix $A_{0}$ is given by
\begin{align*}
A_{0}
&=
P^{-1}\bigl(\tilde{H}_{0}-\lambda_{----}I_{4}\bigr)P
+
P^{-1}\tilde{H}_{1}P
\\
&=
\begin{pmatrix}
\lambda_{++++} J &
   \begin{matrix}
     \frac{\lambda_{+-++}\lambda_{+++-}}{\lambda_{++++}}
   & \frac{\lambda_{-+++}\lambda_{++-+}}{\lambda_{++++}}
   \\[\jot]
     \frac{4\lambda_{0+00}\lambda_{000+}}{\lambda_{----}}
   & \frac{4\lambda_{+000}\lambda_{00+0}}{\lambda_{----}}
   \end{matrix}
\\
\begin{matrix}
    \frac{4\lambda_{+000}\lambda_{00+0}}{\lambda_{-+-+}}
  & \frac{\lambda_{-+++}\lambda_{++-+}}{\lambda_{-+-+}}
  \\[\jot]
    \frac{4\lambda_{0+00}\lambda_{000+}}{\lambda_{+-+-}}
  & \frac{\lambda_{+-++}\lambda_{+++-}}{\lambda_{+-+-}}
\end{matrix}
 & \lambda_{-+-+} J
\end{pmatrix}
\end{align*}
with
\begin{equation*}
J=\begin{pmatrix} 1 & \\ & -1 \end{pmatrix}.
\end{equation*}
Note that $A_{0}$
is similar to 
a diagonal matrix of the form
\begin{equation*}
\begin{pmatrix}
\lambda_{++--} J &                \\
              & \lambda_{+--+} J  \\
\end{pmatrix}.
\end{equation*}

Consider a size four system of the Okubo normal form
\begin{equation}
(xI_{4}-T)\frac{du}{dx}
=Au,
\label{sys:gene}
\end{equation}
where $T$ and $A$ are $4\times4$ matrices of the form
\begin{equation*}
T=
\begin{pmatrix}
  0I_{2} &       \\
    & 1I_{2}     \\
   \end{pmatrix}
\quad\mbox{and}\quad
A=
\begin{pmatrix}
  aJ     & A_{12} \\
  A_{21} & bJ     \\
\end{pmatrix},
\end{equation*}
and $A$ is assumed to be similar to
a diagonal matrix of the form
\begin{equation*}
\begin{pmatrix}
  cJ &    \\
     & dJ \\
\end{pmatrix}.
\end{equation*}
Throughout this paper,
we assume 
the condition
\begin{equation}
a, b, c, d, 2a, 2b, 2c, 2d,
a\pm b, a\pm c, a\pm d, b\pm c, b\pm d, c\pm d \not\in \mathbb{Z}.
\label{cond:base}
\end{equation}
The system \eqref{sys:gene}
has the same local exponents as \eqref{sys:zero} if 
$a=\lambda_{++++}$,  
$b=\lambda_{-+-+}$,  
$c=\lambda_{++--}$,  
$d=\lambda_{+--+}$,  
while contains two accessory parameters
as a Fuchsian system of normal form
since its spectral type is $((211),(211),(1111))$
and then its index of rigidity is
\begin{equation*}
\iota=(1-2)\cdot4^2+\bigl(\bigl(2^2+1^2+1^2\bigr)
                         +\bigl(2^2+1^2+1^2\bigr)
                         +\bigl(1^2+1^2+1^2+1^2\bigr)\bigr)=0
\end{equation*}
(see Haraoka \cite[7.4.2]{Har20} 
for the relation of the index of rigidity 
and the number of accessory parameters,
see also Proposition \ref{prop:expr_a_kai} and
Remark \ref{rem:no_AP}).  
The system \eqref{sys:zero} is nothing but \eqref{sys:gene}
with special values of the accessory parameters.

The system \eqref{sys:gene} has convergent power series solutions
near each singularity. 
Coefficient vectors of the series solution
satisfy a system of linear difference equations.
In this paper
we propose a condition 
about the systems of linear difference equations
satisfied by the coefficient vectors
for determining values of accessory parameters.

Recently Ebisu \cite{Ebi18} 
(see also Ebisu et~al.~\cite{EHKOSY19})
developed the theory of invariants
of scalar linear difference equations of higher order,
and defined the notion termed \emph{essentially the same}
for the difference equations.
Moreover, he expands the notion
to systems of linear difference equations of the first order.

\begin{definition}[Ebisu \cite{Ebi19}]
\label{def:essen_same}
Let $B(z)$ and $C(z)$ be $n\times n$ matrices that consist
of rational functions of $z$.
Two systems of difference equations
\begin{equation*}
f(z+1)=B(z)f(z) \quad \mbox{and} \quad h(z+1)=C(z)h(z),
\end{equation*}
where $f(z)$ and $h(z)$ are unknown $n$-vectors,
are said to be \emph{essentially the same} 
if there exists a diagonal transformation of the form
\begin{equation}
f(z)
=
\begin{pmatrix}
  \Gamma_{1}(z) & & & \\
  & \Gamma_{2}(z) & & \\
  & & \ddots & \\
  & & & \Gamma_{n}(z) \\
\end{pmatrix}
h(z),
\label{trans:inDef}
\end{equation}
where $\Gamma_{j}(z)$ satisfies a linear difference equation
of the first order
\begin{equation*}
\Gamma_{j}(z+1)=g_{j}(z)\Gamma_{j}(z),
\end{equation*}
$g_{j}(z)$ being a rational function of $z$ ($j=1,2,\ldots,n$).
\end{definition}

However, we make the following restrictive definition.

\begin{definition}
\label{def:subst_same}
Two systems
$f(z+1)=B(z)f(z)$ and $h(z+1)=C(z)h(z)$
of essentially the same
are said to be \emph{substantially the same} 
if we can take
\begin{equation*}
\Gamma_{1}(z)=
\Gamma_{2}(z)=\cdots=
\Gamma_{n}(z)
\end{equation*}
in the transformation \eqref{trans:inDef}.
\end{definition}

We shall show in Theorem \ref{thm:main} that 
it becomes a condition for determining 
values of accessory parameters
that the system of difference equations
for the series solution near $x=1$
and that 
for the series solution near $x=\infty$
are substantially the same.
Moreover, we shall show in Theorem \ref{thm:realize}
that the system of the Okubo normal form determined
in Theorem \ref{thm:main} coincides with the system \eqref{sys:zero}
up to a diagonal transformation.

\section{Preliminary}
In this section we give a parametrization 
of the coefficient matrix $A$ in \eqref{sys:gene}.

\begin{proposition} \label{prop:expr_a_kai}
Assume the condition \eqref{cond:base}.
For $\xi=\pm c, \pm d$ let $v_{\xi}$ be a left eigenvector of 
the matrix $A$  
in \eqref{sys:gene} 
with respect to the eigenvalue $\xi$.
Provided that $v_{\pm c}
  =\begin{pmatrix} 
     v^{1}_{\pm c} & v^{2}_{\pm c} & v^{3}_{\pm c} & v^{4}_{\pm c}
   \end{pmatrix}$ satisfy
\begin{equation}
v^{k}_{\pm c}\ne 0 \ \ (k=1,2,3,4)
\quad \mbox{and} \quad
\det \begin{pmatrix} 
     v^{l}_{-c} & v^{l+1}_{-c} \\[\jot]
     v^{l}_{c} & v^{l+1}_{c}
   \end{pmatrix}\ne 0 \ \ (l=1,3),
\label{cond:v_c}
\end{equation}
then
there exists a diagonal matrix $D$ such that
$DAD^{-1}$ has a parametrization of the form
\begin{equation}
DAD^{-1}
=
\begin{pmatrix}
a J &
   \begin{matrix}
     \frac{(b-c)r_{2}-(b+c)r_{3}}{r_{1}-r_{2}}  
   & \frac{(b+c)r_{2}-(b-c)r_{4}}{r_{2}-r_{1}}  
   \\[\jot]
     \frac{(b-c)r_{1}-(b+c)r_{3}}{r_{2}-r_{1}}  
   & \frac{(b+c)r_{1}-(b-c)r_{4}}{r_{1}-r_{2}}  
   \end{matrix}
\\
\begin{matrix}
    \frac{(a+c)r_{1}-(a-c)r_{4}}{r_{4}-r_{3}}  
  & \frac{(a-c)r_{2}-(a+c)r_{4}}{r_{3}-r_{4}}  
  \\[\jot]
    \frac{(a+c)r_{1}-(a-c)r_{3}}{r_{3}-r_{4}}  
  & \frac{(a-c)r_{2}-(a+c)r_{3}}{r_{4}-r_{3}}  
\end{matrix}
 & b J
\end{pmatrix},
\label{parametrization_r}
\end{equation}
where
the $r_{k}$'s
satisfy
\begin{equation}
r_{1}r_{2}r_{3}r_{4}\ne0, \quad
r_{1}-r_{2}\ne0, \quad 
r_{3}-r_{4}\ne0
\label{cond:r}
\end{equation}
and
\begin{equation}
\frac{
\begin{aligned}
&
 (a+b+c)^2r_{1}r_{3}
-(a-b+c)^2r_{1}r_{4}
\\[-\jot]
&
\qquad{}
-(a-b-c)^2r_{2}r_{3}
+(a+b-c)^2r_{2}r_{4}
-4abr_{1}r_{2}
-4abr_{3}r_{4}
\end{aligned}
}{(r_{1}-r_{2})(r_{3}-r_{4})}
=d^2.
\label{cond:d}
\end{equation}
\end{proposition}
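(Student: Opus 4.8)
The plan is to reconstruct $A$ from the two prescribed left eigenvectors $v_{c},v_{-c}$, absorbing all remaining freedom into one diagonal conjugation. First I would set $D:=\operatorname{diag}(v^{1}_{c},v^{2}_{c},v^{3}_{c},v^{4}_{c})$, which is invertible since $v^{k}_{c}\ne 0$ for every $k$ by \eqref{cond:v_c}. A diagonal matrix commutes with $J$, hence with the diagonal blocks $aJ$ and $bJ$, so $DAD^{-1}$ is again of the block shape $\begin{pmatrix}aJ&*\\ *&bJ\end{pmatrix}$ and is still similar to $\operatorname{diag}(cJ,dJ)$; moreover its left eigenvectors for $c$ and $-c$ are $v_{c}D^{-1}=(1,1,1,1)$ and $v_{-c}D^{-1}=(r_{1},r_{2},r_{3},r_{4})$, where $r_{k}:=v^{k}_{-c}/v^{k}_{c}$. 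Here $r_{1}r_{2}r_{3}r_{4}\ne 0$ because $v^{k}_{\pm c}\ne 0$, and dividing the two $2\times 2$ determinants in \eqref{cond:v_c} by $v^{l}_{c}v^{l+1}_{c}$ turns them into $r_{1}-r_{2}\ne 0$ and $r_{3}-r_{4}\ne 0$; this is precisely \eqref{cond:r}. It therefore suffices to show that a matrix of the above block shape whose left eigenvectors for $c$ and $-c$ are $(1,1,1,1)$ and $(r_{1},r_{2},r_{3},r_{4})$ must equal the right-hand side of \eqref{parametrization_r}, after which $D$ above is the asserted diagonal matrix.

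For that I would write the eigenvector relations in coordinates. Splitting a row vector as $(v',v'')$ with $v',v''\in\mathbb{C}^{2}$, the first two components of $vA=\xi v$ read $v''A_{21}=v'(\xi I_{2}-aJ)$ and involve only $A_{21}$, while the last two read $v'A_{12}=v''(\xi I_{2}-bJ)$ and involve only $A_{12}$. Taking $\xi=c$ with $v=(1,1,1,1)$ and $\xi=-c$ with $v=(r_{1},r_{2},r_{3},r_{4})$, each column of $A_{21}$ solves a $2\times 2$ system with coefficient matrix $\begin{pmatrix}1&1\\ r_{3}&r_{4}\end{pmatrix}$, and each column of $A_{12}$ one with coefficient matrix $\begin{pmatrix}1&1\\ r_{1}&r_{2}\end{pmatrix}$; both are invertible by \eqref{cond:r}. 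Solving by Cramer's rule and collecting terms produces exactly the entries displayed in \eqref{parametrization_r}. This step is elementary and finite.

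It remains to derive \eqref{cond:d} from the spectral hypothesis. Since $A$ is similar to $\operatorname{diag}(cJ,dJ)=\operatorname{diag}(c,-c,d,-d)$, the conjugation-invariant quantity $\tr(A^{2})$ equals $2c^{2}+2d^{2}$; on the other hand, squaring the block form and using $(aJ)^{2}=a^{2}I_{2}$, $(bJ)^{2}=b^{2}I_{2}$ and $\tr(A_{12}A_{21})=\tr(A_{21}A_{12})$ gives $\tr(A^{2})=2a^{2}+2b^{2}+2\tr(A_{12}A_{21})$. Hence $d^{2}=a^{2}+b^{2}-c^{2}+\tr(A_{12}A_{21})$. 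Substituting the parametrized blocks, every entry of $A_{12}$ has denominator $r_{1}-r_{2}$ and every entry of $A_{21}$ has denominator $r_{3}-r_{4}$, so $\tr(A_{12}A_{21})$ is a fraction with denominator $(r_{1}-r_{2})(r_{3}-r_{4})$ whose numerator is quadratic in $r_{1},\dots,r_{4}$ and involves only the monomials $r_{1}r_{3},r_{1}r_{4},r_{2}r_{3},r_{2}r_{4},r_{1}r_{2},r_{3}r_{4}$. The proposition then becomes equivalent to a single polynomial identity: $(a^{2}+b^{2}-c^{2})(r_{1}-r_{2})(r_{3}-r_{4})$ plus this numerator equals the numerator appearing on the left of \eqref{cond:d}.

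I would establish that identity by matching the coefficients of those six monomials on both sides. For instance the numerator of $\tr(A_{12}A_{21})$ contributes $2(a+c)(b+c)$ and $-4ab$ to the coefficients of $r_{1}r_{3}$ and $r_{1}r_{2}$ respectively, so the left-hand side has $r_{1}r_{3}$-coefficient $2(a+c)(b+c)+(a^{2}+b^{2}-c^{2})=(a+b+c)^{2}$ and $r_{1}r_{2}$-coefficient $-4ab$, in agreement with \eqref{cond:d}; the remaining four coefficients are handled the same way. This coefficient bookkeeping is the one genuinely laborious part of the argument, and the place where a sign error is easiest to make; everything leading up to it is structural.
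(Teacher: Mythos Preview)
Your argument is correct. The choice of $D$ and the ratios $r_k=v^{k}_{-c}/v^{k}_{c}$ coincide exactly with the paper's, and your derivation of the off-diagonal blocks from the two left-eigenvector equations is in substance the same computation the paper performs---the paper simply packages the two eigenvector relations into rows of an auxiliary $8\times8$ matrix $\tilde{A}$ satisfying $\tilde{A}^{2}=d^{2}I_{8}$ and reads them off as the $(3,1)$- and $(3,2)$-blocks of that identity, while you write them out directly and invoke Cramer's rule.

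Where you genuinely diverge is in obtaining \eqref{cond:d}. The paper extracts it from the $(1,1)$- and $(1,2)$-blocks of $\tilde{A}^{2}=d^{2}I_{8}$, eliminating an unknown factor $D_{1}L'_{11}$ between two block equations and then reading a diagonal entry. Your route via $\tr(A^{2})=2c^{2}+2d^{2}$ is more elementary: it uses only a conjugation invariant and reduces the task to a six-term polynomial identity that you check coefficient by coefficient. This buys simplicity and avoids introducing the $8\times8$ machinery at all; the paper's block method, on the other hand, yields the relation as an \emph{entry} (not merely a trace) of a matrix equation, which in principle carries finer information, though only the single scalar relation \eqref{cond:d} is needed here. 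Either way the logical content is the same necessary condition on $r_{1},\dots,r_{4}$.
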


\begin{proof}
Put 
$L=\begin{pmatrix} 
     v_{-c} \\ v_{c} \\ v_{-d} \\ v_{d}
   \end{pmatrix}$
and 
$D=\begin{pmatrix} 
     v^{1}_{c} & & & \\
     & v^{2}_{c} & & \\
     & & v^{3}_{c} & \\
     & & & v^{4}_{c}
   \end{pmatrix}$,
and write
\begin{equation*}
L
=
\begin{pmatrix}
  L_{11} & L_{12}   \\
  L_{21} & L_{22}   \\
\end{pmatrix}
,
\quad
L^{-1}
=
\begin{pmatrix}
  L^{\prime}_{11} & L^{\prime}_{12}   \\
  L^{\prime}_{21} & L^{\prime}_{22}   \\
\end{pmatrix},
\quad
D
=
\begin{pmatrix}
  D_{1} &    \\
   & D_{2}   \\
\end{pmatrix},
\end{equation*}
where all the submatrices are $2\times2$ matrices.
Note that $D_{1}$, $D_{2}$, $L_{11}$, $L_{12}$ are invertible 
because of \eqref{cond:v_c}.
Set
\begin{equation*}
\tilde{A}
=
\begin{pmatrix}
  D &   \\
    & L \\
\end{pmatrix}
\begin{pmatrix}
  A & I_{4}  \\
  d^2I_{4}-A^2 & -A \\
\end{pmatrix}
\begin{pmatrix}
  D &   \\
    & L \\
\end{pmatrix}^{-1}.
\end{equation*}
Then $\tilde{A}$ 
satisfies 
\begin{equation*}
\tilde{A}^2=d^2I_{8}.  
\end{equation*}

Since $\tilde{A}$ has an expression of the form 
\begin{equation*}
\tilde{A}
=
\begin{pmatrix}
  aJ & D_{1}A_{12}{D_{2}}^{-1} & D_{1}L^{\prime}_{11} & D_{1}L^{\prime}_{12}
  \\
  D_{2}A_{21}{D_{1}}^{-1} & bJ & D_{2}L^{\prime}_{21} & D_{2}L^{\prime}_{22}
  \\
  \bigl(d^2-c^2\bigr) L_{11}{D_{1}}^{-1} &
  \bigl(d^2-c^2\bigr) L_{12}{D_{2}}^{-1} & cJ & \\
  & & & dJ \\
\end{pmatrix},
\end{equation*}
we obtain
\begin{subequations}
\begin{align}
L_{11}{D_{1}}^{-1}D_{1}A_{12}{D_{2}}^{-1} 
+ bL_{12}{D_{2}}^{-1}J + cJL_{12}{D_{2}}^{-1}&=O,
\label{rel:32block}
\\
L_{12}{D_{2}}^{-1}D_{2}A_{21}{D_{1}}^{-1}
+ aL_{11}{D_{1}}^{-1}J + cJL_{11}{D_{1}}^{-1}&=O
\label{rel:31block}
\end{align}
\end{subequations}
from the $(3,2)$-block,
the $(3,1)$-block of $\tilde{A}^2=d^2I_{8}$.
Setting
\begin{equation*}
r_{k}=\frac{v^{k}_{-c}}{v^{k}_{c}} \quad (k=1,2,3,4),
\end{equation*}
we have
\begin{equation}
L_{11}{D_{1}}^{-1}
=\begin{pmatrix}
  r_{1} & r_{2} \\
  1     & 1     \\
 \end{pmatrix}, 
\quad
L_{12}{D_{2}}^{-1}
=\begin{pmatrix}
  r_{3} & r_{4} \\
  1     & 1     \\
 \end{pmatrix}.
\label{expr:by_r}
\end{equation}
Note that $r_{1}r_{2}r_{3}r_{4}\ne0$ and
\begin{equation*}
r_{1}-r_{2}
=
\det\bigl(L_{11}{D_{1}}^{-1}\bigr)\ne0,
\quad
r_{3}-r_{4}
=
\det\bigl(L_{12}{D_{2}}^{-1}\bigr)\ne0
\end{equation*}
by the condition \eqref{cond:r}.
Substituting \eqref{expr:by_r} into \eqref{rel:32block}--\eqref{rel:31block}, 
we obtain
\begin{align*}
D_{1}A_{12}{D_{2}}^{-1} 
&=
-\begin{pmatrix}
  r_{1} & r_{2} \\
  1     & 1     \\
 \end{pmatrix}^{-1} 
\left( b\begin{pmatrix}
  r_{3} & -r_{4} \\
  1     & -1     \\
 \end{pmatrix}
  + c\begin{pmatrix}
  r_{3} & r_{4} \\
  -1     & -1     \\
 \end{pmatrix} \right),
\\
D_{2}A_{21}{D_{1}}^{-1}
&=
-\begin{pmatrix}
  r_{3} & r_{4} \\
  1     & 1     \\
 \end{pmatrix}^{-1}
\left( a\begin{pmatrix}
  r_{1} & -r_{2} \\
  1     & -1     \\
 \end{pmatrix} 
 + c\begin{pmatrix}
  r_{1} & r_{2} \\
  -1     & -1     \\
 \end{pmatrix} \right),
\end{align*}
which give \eqref{parametrization_r}.

Moreover, 
as the $(1,1)$-block,
the $(1,2)$-block of $\tilde{A}^2=d^2I_{8}$
we have
\begin{align*}
a^2I_{2}
+ D_{1}A_{12}{D_{2}}^{-1} D_{2}A_{21}{D_{1}}^{-1}
+ \bigl(d^2-c^2\bigr)D_{1}L^{\prime}_{11} L_{11}{D_{1}}^{-1}&=d^2I_{2},
\\
aJD_{1}A_{12}{D_{2}}^{-1}
+ bD_{1}A_{12}{D_{2}}^{-1}J
+ \bigl(d^2-c^2\bigr)D_{1}L^{\prime}_{11} L_{12}{D_{2}}^{-1}&=O.
\end{align*}
Combining these, we obtain
\begin{align*}
&
a^2I_{2}
+ D_{1}A_{12}{D_{2}}^{-1} D_{2}A_{21}{D_{1}}^{-1}
\\[-\jot]
&\quad{}
- \bigl( aJD_{1}A_{12}{D_{2}}^{-1} + bD_{1}A_{12}{D_{2}}^{-1}J 
         \bigr)D_{2}{L_{12}}^{-1} L_{11}{D_{1}}^{-1}=d^2I_{2}.
\end{align*}
A diagonal element of
this relation gives \eqref{cond:d}.
This completes the proof.
\end{proof}

We put
\begin{equation}
A_{1}=
\begin{pmatrix}
a J &
   \begin{matrix}
     \frac{(b-c)r_{2}-(b+c)r_{3}}{r_{1}-r_{2}}  
   & \frac{(b+c)r_{2}-(b-c)r_{4}}{r_{2}-r_{1}}  
   \\[\jot]
     \frac{(b-c)r_{1}-(b+c)r_{3}}{r_{2}-r_{1}}  
   & \frac{(b+c)r_{1}-(b-c)r_{4}}{r_{1}-r_{2}}  
   \end{matrix}
\\
\begin{matrix}
    \frac{(a+c)r_{1}-(a-c)r_{4}}{r_{4}-r_{3}}  
  & \frac{(a-c)r_{2}-(a+c)r_{4}}{r_{3}-r_{4}}  
  \\[\jot]
    \frac{(a+c)r_{1}-(a-c)r_{3}}{r_{3}-r_{4}}  
  & \frac{(a-c)r_{2}-(a+c)r_{3}}{r_{4}-r_{3}}  
\end{matrix}
 & b J
\end{pmatrix},
\label{coef:A1}
\end{equation}
where we regard 
$r_{k}$ $(k=1,2,3,4)$
as arbitrary parameters
not relating to
$v^{k}_{\pm c}$ $(k=1,2,3,4)$
but satisfying \eqref{cond:r} and \eqref{cond:d}.
In what follows,
we investigate the system
\begin{equation}
(xI_{4}-T)\frac{dy}{dx}
=A_{1}y,
\label{sys:one}
\end{equation}
where $T=
\begin{pmatrix}
  0I_{2} &       \\
    & 1I_{2}     \\
   \end{pmatrix}$.

\begin{remark}
\label{rem:no_AP}
All the elements of the non-diagonal blocks of $A_{1}$
are expressed by
a ratio of homogeneous polynomials in $r_{k}$ $(k=1,2,3,4)$ of degree one.
Taking the relation \eqref{cond:d} into account,
we find that they can be expressed by two parameters,
for instance, by
\begin{equation*}
t_{k}=\frac{r_{k}}{r_{4}} \quad (k=1,2)
\end{equation*}
in addition to the local exponents $a$, $b$, $c$, $d$. 
The two parameters are accessory parameters.
\end{remark}

\begin{remark}
The local exponents $(0,0,a,-a)$, $(0,0,b,-b)$, $(c,-c,d,-d)$
of \eqref{sys:gene} are essential for the parametrization
\eqref{parametrization_r}.
A generalization of the local exponents to
\begin{equation*}
(0,0,a,a^{\prime}), \quad 
(0,0,b,b^{\prime}), \quad 
(c,c^{\prime},d,d^{\prime})
\end{equation*}
is impossible.
\end{remark}

\section{Local solutions}
In this section we study
local solutions of \eqref{sys:one}
near its singular points.
The system \eqref{sys:one} has regular singularity
at $x=0$, $1$ and $\infty$.
Assume the condition \eqref{cond:base}.
Near $x=1$ it has solutions of the form
\begin{equation*}
y(x)=\sum_{r=0}^{\infty}g(r)(x-1)^{r+\rho}
, \quad
\rho=0,\,\pm b,
\end{equation*}
where the coefficient vectors $g(r)$ ($r=0,1,2,\ldots$)
are determined by the system of difference equations
\begin{equation}
\left\{
  \begin{aligned}
   (r+\rho+1)(T-I_{4})g(r+1)&=\{(r+\rho)I_{4}-A_{1}\}g(r),
   \\
   \rho(T-I_{4})g(0)&=0.
  \end{aligned}
\right. \label{sabun:g}
\end{equation}
Near $x=\infty$ the system \eqref{sys:one} has solutions of the form  
\begin{equation*}
y(x)=\sum_{s=0}^{\infty}h(s)(x-1)^{-s-\sigma}
, \quad
\sigma=\pm c,\,\pm d,
\end{equation*}
where the coefficient vectors $h(s)$ ($s=0,1,2,\ldots$)
are determined by the system of difference equations
\begin{equation}
\left\{
  \begin{aligned}
   \{(s+\sigma+1)I_{4}+A_{1}\}h(s+1)&=(s+\sigma)(T-I_{4})h(s),
   \\
   (\sigma I_{4}+A_{1})h(0)&=0.
  \end{aligned}
\right. \label{sabun:h}
\end{equation}
Note that we express the local solutions near $x=\infty$
by means of powers
not in $x^{-1}$ but in $(x-1)^{-1}$.

In the system \eqref{sabun:g} we set
$z=r+\rho$ and $\tilde{g}(z)=g(z-\rho)$.
Then we obtain the system of difference equations
\begin{equation}
(z+1)(T-I_{4})\tilde{g}(z+1)=(zI_{4}-A_{1})\tilde{g}(z).
\label{sabun:g_z}
\end{equation}
Besides,
in the system \eqref{sabun:h} we set
$z=s+\sigma+1$ and $\tilde{h}(z)=h(z-1-\sigma)$.
Then we obtain the system of difference equations
\begin{equation}
(zI_{4}+A_{1})\tilde{h}(z+1)=(z-1)(T-I_{4})\tilde{h}(z).
\label{sabun:h_z}
\end{equation}
Since
$T-I_{4}=\begin{pmatrix} -1I_{2} &  \\ & 0I_{2} \end{pmatrix}$,
both
\eqref{sabun:g_z} and \eqref{sabun:h_z} are reducible
to a size two system of difference equations.
Indeed, we can write 
\eqref{sabun:g_z}, \eqref{sabun:h_z} in the form
\begin{gather}
(z+1)(zI_{4}-A_{1})^{-1}(T-I_{4})\tilde{g}(z+1)=\tilde{g}(z),
\label{sabun:gg_z}
\\
\tilde{h}(z+1)=(z-1)(zI_{4}+A_{1})^{-1}(T-I_{4})\tilde{h}(z).
\label{sabun:hh_z}
\end{gather}
Writing
\begin{equation*}
\tilde{g}(z)=\begin{pmatrix} \tilde{g}_{1}(z) \\ \tilde{g}_{2}(z) \end{pmatrix}
, \quad
\tilde{h}(z)=\begin{pmatrix} \tilde{h}_{1}(z) \\ \tilde{h}_{2}(z) \end{pmatrix}
\end{equation*}
and
\begin{equation}
(zI_{4}-A_{1})^{-1}
=\frac{1}{(z^2-c^2)(z^2-d^2)}
\begin{pmatrix} A_{11}(z) & A_{12}(z) \\
                A_{21}(z) & A_{22}(z) \end{pmatrix},
\label{inv:z-A1}
\end{equation}
where $\tilde{g}_{j}(z)$ and $\tilde{h}_{j}(z)$ ($j=1,2$)
are $2$-dimensional, and
$A_{jk}(z)$ ($j,k=1,2$) are $2\times2$ matrices,
we obtain
\begin{subequations}
\begin{align}
-\frac{z+1}{(z^2-c^2)(z^2-d^2)}
A_{11}(z)\tilde{g}_{1}(z+1)
&=\tilde{g}_{1}(z),
\label{sabun:gg1}
   \\
-\frac{z+1}{(z^2-c^2)(z^2-d^2)}
A_{21}(z)\tilde{g}_{1}(z+1)
&=\tilde{g}_{2}(z)
\label{sabun:gg2}
\end{align}
\end{subequations}
from \eqref{sabun:gg_z},
and
\begin{subequations}
\begin{align}
\tilde{h}_{1}(z+1)
&=\frac{z-1}{(z^2-c^2)(z^2-d^2)}
A_{11}(-z)\tilde{h}_{1}(z),
\label{sabun:hh1}
   \\
\tilde{h}_{2}(z+1)
&=\frac{z-1}{(z^2-c^2)(z^2-d^2)}
A_{21}(-z)\tilde{h}_{1}(z)
\label{sabun:hh2}
\end{align}
\end{subequations}
from \eqref{sabun:hh_z}.

\section{Main results}
Recall Definition \ref{def:subst_same}.

\begin{theorem} \label{thm:main}
Suppose that the system \eqref{sys:one} satisfies
\eqref{cond:base},
\eqref{cond:r} and \eqref{cond:d}.
The systems \eqref{sabun:gg1} and \eqref{sabun:hh1}
for \eqref{sys:one} 
are substantially the same if and only if
the coefficient matrix $A_{1}$
in \eqref{sys:one} 
is equal to
\begin{equation}
\begin{pmatrix}
a J &
   \begin{matrix}
      \frac{(a-b+c)^2-d^2}{4a} & \frac{(a+b+c)^2-d^2}{4a}
   \\[\jot]
      \frac{d^2-(a+b-c)^2}{4a} & \frac{d^2-(a-b-c)^2}{4a}
   \end{matrix}
\\
\begin{matrix}
      \frac{(a-b-c)^2-d^2}{4b} & \frac{(a+b+c)^2-d^2}{4b}
  \\[\jot]
      \frac{d^2-(a+b-c)^2}{4b} & \frac{d^2-(a-b+c)^2}{4b}
\end{matrix}
 & b J
\end{pmatrix}
\label{coef:A1_noap}
\end{equation}
that is obtained by substituting
\begin{subequations}
\begin{align}
r_{1}&=
\frac{(a+b-c)^2-d^2}{(a+b+c)^2-d^2}
r_{4},
\quad
r_{2}=
\frac{(a-b+c)^2-d^2}
     {(a-b-c)^2-d^2}
r_{4},
\label{expr:r12by4}
\\
r_{3}&=
\frac{\bigl((a+b-c)^2-d^2\bigr)\bigl((a-b+c)^2-d^2\bigr)}
     {\bigl((a+b+c)^2-d^2\bigr)\bigl((a-b-c)^2-d^2\bigr)}
r_{4}
\label{expr:r3by4}
\end{align}
\end{subequations}
or
\begin{subequations}
\begin{align}
r_{1}&=
\frac{(a-b-c)^2-d^2}{(a-b+c)^2-d^2}
r_{3},
\quad
r_{2}=
\frac{(a+b+c)^2-d^2}{(a+b-c)^2-d^2}
r_{3},
\label{expr:r12by3}
\\
r_{4}&=
\frac{\bigl((a+b+c)^2-d^2\bigr)\bigl((a-b-c)^2-d^2\bigr)}
     {\bigl((a+b-c)^2-d^2\bigr)\bigl((a-b+c)^2-d^2\bigr)}
r_{3}
\label{expr:r4by3}
\end{align}
\end{subequations}
into \eqref{coef:A1},
where the denominators
$\bigl((a+b+c)^2-d^2\bigr)\bigl((a-b-c)^2-d^2\bigr)$ and 
$\bigl((a+b-c)^2-d^2\bigr)\bigl((a-b+c)^2-d^2\bigr)$
never vanish simultaneously.
\end{theorem}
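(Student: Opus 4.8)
The plan is to turn ``substantially the same'' into a polynomial condition on the off-diagonal blocks of $A_{1}$, and then to solve that condition.

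\emph{Step 1: reduction to a matrix identity.} First I would put \eqref{sabun:gg1} and \eqref{sabun:hh1} in the normalized form $\tilde g_{1}(z+1)=B(z)\tilde g_{1}(z)$, $\tilde h_{1}(z+1)=C(z)\tilde h_{1}(z)$, so that $B(z)=-\dfrac{(z^{2}-c^{2})(z^{2}-d^{2})}{z+1}A_{11}(z)^{-1}$ and $C(z)=\dfrac{z-1}{(z^{2}-c^{2})(z^{2}-d^{2})}A_{11}(-z)$. By Definition~\ref{def:subst_same} the two systems are substantially the same exactly when $B(z)=g(z)C(z)$ for some rational $g$ (the common scalar $\Gamma$ is then recovered as a solution of $\Gamma(z+1)=g(z)\Gamma(z)$); left-multiplying by $A_{11}(z)$, this is equivalent to $A_{11}(z)A_{11}(-z)$ being a scalar multiple of $I_{2}$, and the scalar is automatically a polynomial since $A_{11}(z)$ has polynomial entries. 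Next I would record a closed form for $A_{11}(z)$: since $\det(zI_{4}-A_{1})=(z^{2}-c^{2})(z^{2}-d^{2})$ (by \eqref{cond:d}, equivalently because the exponents of \eqref{sys:one} at $x=\infty$ are $\pm c,\pm d$), the Faddeev--LeVerrier formula gives $\adj(zI_{4}-A_{1})=z^{3}I_{4}+z^{2}A_{1}+z(A_{1}^{2}-(c^{2}+d^{2})I_{4})+(A_{1}^{3}-(c^{2}+d^{2})A_{1})$; taking the upper-left $2\times2$ block and writing $A_{1}=\begin{pmatrix}aJ&A_{12}\\A_{21}&bJ\end{pmatrix}$ as in \eqref{sys:gene}, $W:=A_{12}A_{21}$, a block computation yields
\[
A_{11}(z)=(z^{2}-c^{2}-d^{2})(zI_{2}+aJ)+z(a^{2}I_{2}+W)+\bigl(a^{3}J+a(JW+WJ)+bA_{12}JA_{21}\bigr),
\]
a cubic matrix polynomial with leading coefficient $I_{2}$; write $Q:=(a^{2}-c^{2}-d^{2})I_{2}+W$ for its coefficient of $z^{1}$ and $R:=A_{11}(0)$ for its constant term.

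\emph{Step 2: the condition is ``$W$ scalar''.} Expanding $A_{11}(z)A_{11}(-z)$ in powers of $z$, the coefficient of $z^{4}$ is $a^{2}I_{2}-2Q=(2c^{2}+2d^{2}-a^{2})I_{2}-2W$, so scalar-ness forces $W$ to be a scalar matrix (and then $W=\tfrac12(c^{2}+d^{2}-a^{2}-b^{2})I_{2}$, by $\tr A_{1}^{2}=2c^{2}+2d^{2}$). Conversely, if $W$ is scalar then so is $Q$: the $z^{3}$- and $z^{1}$-coefficients of $A_{11}(z)A_{11}(-z)$ are commutators of $Q$ with $aJ$ and with $R$, hence vanish; the $z^{2}$-coefficient is $a(JR+RJ)-Q^{2}$, which is scalar precisely when $\tr R=0$; and the $z^{0}$-coefficient is $R^{2}=-(\det R)I_{2}$ by Cayley--Hamilton once $\tr R=0$. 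That $\tr R=0$ is automatic when $W$ is scalar follows by equating the two expressions for $R=A_{11}(0)$: the one displayed above, and the Schur-complement form $A_{11}(z)=\adj\bigl((z^{2}-b^{2})(zI_{2}-aJ)-A_{12}(zI_{2}+bJ)A_{21}\bigr)$ together with $\tr\adj(\cdot)=\tr(\cdot)$; this yields the identity $a\tr(JW)+b\tr(A_{12}JA_{21})=0$, so $W$ scalar $\Rightarrow\tr(JW)=0\Rightarrow\tr(A_{12}JA_{21})=0\Rightarrow\tr R=0$. Hence \eqref{sabun:gg1} and \eqref{sabun:hh1} are substantially the same if and only if $A_{12}A_{21}$ is a scalar matrix.

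\emph{Step 3: solving for the $r_{k}$, and the converse.} From \eqref{coef:A1}, $A_{12}=-M_{1}^{-1}(bM_{2}J+cJM_{2})$ and $A_{21}=-M_{2}^{-1}(aM_{1}J+cJM_{1})$ with $M_{1}=\begin{pmatrix}r_{1}&r_{2}\\1&1\end{pmatrix}$, $M_{2}=\begin{pmatrix}r_{3}&r_{4}\\1&1\end{pmatrix}$, so $A_{12}A_{21}=M_{1}^{-1}(bK_{2}+cJ)(aK_{1}+cJ)M_{1}$ with $K_{i}=M_{i}JM_{i}^{-1}$; thus ``$A_{12}A_{21}$ scalar'' $\iff$ ``$(bK_{2}+cJ)(aK_{1}+cJ)$ scalar''. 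Writing out the four entries of the latter, the requirements $(1,1)=(2,2)$ and $(2,1)=0$ are
\[
r_{1}r_{2}=r_{3}r_{4},\qquad ab(r_{1}+r_{2}-r_{3}-r_{4})+bc(r_{1}-r_{2})-ac(r_{3}-r_{4})=0,
\]
and $(1,2)=0$ is a consequence (substituting the first relation, it becomes $r_{1}r_{2}$ times the second). Solving this pair together with \eqref{cond:d} and \eqref{cond:r} pins $[r_{1}:r_{2}:r_{3}:r_{4}]$ down to the single point $[E_{2}E_{4}:E_{1}E_{3}:E_{2}E_{3}:E_{1}E_{4}]$, where $E_{1}=(a+b+c)^{2}-d^{2}$, $E_{2}=(a+b-c)^{2}-d^{2}$, $E_{3}=(a-b+c)^{2}-d^{2}$, $E_{4}=(a-b-c)^{2}-d^{2}$ --- which is precisely the point described by \eqref{expr:r12by4}--\eqref{expr:r3by4} (usable when $E_{1}E_{4}\ne0$) and by \eqref{expr:r12by3}--\eqref{expr:r4by3} (usable when $E_{2}E_{3}\ne0$); and $E_{1}E_{4}$ and $E_{2}E_{3}$ are never both zero, since any simultaneous vanishing would force one of the quantities listed in \eqref{cond:base} to be zero. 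Since \eqref{coef:A1} is homogeneous of degree $0$ in the $r_{k}$, substituting this point produces a definite matrix, which a direct $2\times2$ calculation identifies with \eqref{coef:A1_noap}; and for \eqref{coef:A1_noap} one computes $A_{12}A_{21}=\tfrac{E_{3}E_{4}-E_{1}E_{2}}{16ab}I_{2}$, a scalar matrix, which establishes the converse implication.

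\emph{Where the difficulty lies.} The conceptual heart is Step~2 --- realizing that the seemingly subtle ``substantially the same'' collapses to a single scalar-ness condition on $A_{12}A_{21}$, and in particular that the side condition $\tr R=0$ is automatic. The most laborious part is the solving in Step~3 of the over-determined-looking system $\{\,r_{1}r_{2}=r_{3}r_{4},\ \text{linear relation},\ \eqref{cond:d}\,\}$, which becomes tractable once one notices that the $(1,2)$-entry condition is redundant and that $r_{1}r_{2}=r_{3}r_{4}$ is forced; the remaining checks (the Faddeev--LeVerrier reduction, the identity $A_{12}A_{21}=M_{1}^{-1}(bK_{2}+cJ)(aK_{1}+cJ)M_{1}$, and that the resulting $r_{k}$ satisfy \eqref{cond:r} and \eqref{cond:d}) are routine.
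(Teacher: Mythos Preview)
Your proof is correct and lands on the same two conditions the paper isolates --- your $r_{1}r_{2}=r_{3}r_{4}$ and the linear relation are precisely the paper's $\delta=0$ and $\varepsilon=0$ --- but the packaging differs. The paper (Lemmas~\ref{lem:A11} and~\ref{lem:cond_same}) computes both $A_{11}(z)$ \emph{and} its adjugate explicitly in terms of $\varepsilon',\delta'$, writes each difference system as $\tilde g_{1}(z+1)=(\text{explicit cubic})\tilde g_{1}(z)$, and then compares the two cubics entrywise: the vanishing of the leading $z^{4}$-coefficients of $c_{jk}b_{lm}-b_{jk}c_{lm}$ forces $\delta'=\varepsilon'=0$, while sufficiency is immediate because the two cubics literally coincide once $\varepsilon=\delta=0$. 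Your route is more structural: reformulating ``substantially the same'' as ``$A_{11}(z)A_{11}(-z)$ scalar'' and then as ``$A_{12}A_{21}$ scalar'' avoids inverting $A_{11}(z)$ altogether, and your Schur-complement argument for $\tr R=0$ replaces the paper's direct computation $\tr(A'_{12}JA'_{21})=2ac\delta'$. The solving step is essentially the same in both (the paper rewrites the $d^{2}$-constraint modulo $\varepsilon,\delta$ to obtain $r_{1},r_{2}$ linearly in $r_{3},r_{4}$, then substitutes into $\delta=0$). What the paper's version buys is that sufficiency is a one-line observation once Lemma~\ref{lem:A11} is in hand; what yours buys is a coordinate-free intermediate criterion (``$W$ scalar'') that would port more easily to other block shapes.
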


\begin{theorem} \label{thm:realize}
If
\begin{equation}
a=\lambda_{++++}, \quad
b=\lambda_{-+-+}, \quad
c=\lambda_{++--}, \quad
d=\lambda_{+--+}
\label{value:abcd}
\end{equation}
and these values satisfy both of the conditions
\begin{subequations}
\begin{align}
\bigl((a+b+c)^2-d^2\bigr)\bigl((a-b-c)^2-d^2\bigr)&\ne0
\label{cond:abcd_a}
\\
\intertext{and}
\bigl((a+b-c)^2-d^2\bigr)\bigl((a-b+c)^2-d^2\bigr)&\ne0
\label{cond:abcd_b}
\end{align}
\end{subequations}
in addition to \eqref{cond:base},
then
the system \eqref{sys:one} with 
the coefficient matrix \eqref{coef:A1_noap}
coincides with
the system \eqref{sys:zero} up to a diagonal transformation.
\end{theorem}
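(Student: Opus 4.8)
My plan is to deduce Theorem~\ref{thm:realize} from a finite linear-algebra check on the coefficient matrices, using the explicit form of the matrix $A_{0}$ of \eqref{sys:zero} recorded in the Introduction. Since $T$ is diagonal, it commutes with every diagonal matrix, so for nonsingular diagonal $D$ the substitution $u=Dv$ turns \eqref{sys:zero} into $(xI_{4}-T)\,dv/dx=D^{-1}A_{0}D\,v$, again an Okubo normal form with the same $T$; thus it is enough to exhibit a nonsingular diagonal $D$ with $D^{-1}A_{0}D$ equal to \eqref{coef:A1_noap} evaluated at \eqref{value:abcd}. Conjugation by $D=\mathrm{diag}(d_{1},d_{2},d_{3},d_{4})$ leaves each diagonal entry unchanged and multiplies the $(i,j)$-entry by $d_{j}/d_{i}$. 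The diagonal $2\times2$ blocks of both matrices equal $aJ$ and $bJ$, so their diagonal entries ($a,-a,b,-b$) and their in-block off-diagonal entries ($0$) already agree; the whole problem is to see that the eight entries filling the two anti-diagonal $2\times2$ blocks have mutually consistent ratios $r_{ij}:=(A_{1})_{ij}/(A_{0})_{ij}$. On the complete bipartite index graph with parts $\{1,2\}$, $\{3,4\}$ this consistency is equivalent to the four edge relations $r_{ij}r_{ji}=1$ together with the one four-cycle relation $r_{13}r_{24}=r_{14}r_{23}$, and when they hold the required $D$ exists (uniquely up to a scalar).

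The first concrete step is a bookkeeping computation in the notation $\lambda_{\iota_{1}\iota_{2}\iota_{3}\iota_{4}}$. After substituting \eqref{value:abcd}, each quantity in \eqref{coef:A1_noap} is factored as $(a\pm b\pm c)^{2}-d^{2}=(a\pm b\pm c+d)(a\pm b\pm c-d)$, and one checks that each factor equals, up to a factor $2$, a single $\alpha_{j}$ or a single $\lambda_{\cdots}$; for instance
\begin{equation*}
(a-b+c)^{2}-d^{2}=8\lambda_{+000}\lambda_{+++-},\qquad
(a+b+c)^{2}-d^{2}=8\lambda_{0+00}\lambda_{++-+},
\end{equation*}
and likewise $(a+b-c)^{2}-d^{2}=8\lambda_{000+}\lambda_{-+++}$, $(a-b-c)^{2}-d^{2}=8\lambda_{00+0}\lambda_{+-++}$. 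Together with $\lambda_{----}=-a$ and $\lambda_{+-+-}=-b$, this expresses the anti-diagonal blocks of \eqref{coef:A1_noap} in terms of precisely the $\lambda$'s appearing in $\tilde{H}_{0},\tilde{H}_{1}$, hence in $A_{0}$. A patient entrywise comparison then produces
\begin{equation*}
\frac{d_{3}}{d_{1}}=\frac{d_{2}}{d_{4}}=\frac{2\lambda_{+000}}{\lambda_{+-++}},
\qquad
\frac{d_{4}}{d_{1}}=\frac{d_{2}}{d_{3}}=\frac{2\lambda_{0+00}}{\lambda_{-+++}},
\end{equation*}
and shows that the four relations $r_{ij}r_{ji}=1$ and the relation $r_{13}r_{24}=r_{14}r_{23}$ all hold (in fact $r_{13}r_{24}=r_{14}r_{23}=1$); so one may take $d_{1}=1$, $d_{3}=2\lambda_{+000}/\lambda_{+-++}$, $d_{4}=2\lambda_{0+00}/\lambda_{-+++}$, $d_{2}=d_{3}d_{4}$.

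It then remains to see that $D$ is genuinely defined and invertible. By the identities above, the hypotheses \eqref{cond:abcd_a} and \eqref{cond:abcd_b} are exactly $\lambda_{0+00}\lambda_{++-+}\lambda_{00+0}\lambda_{+-++}\neq0$ and $\lambda_{000+}\lambda_{-+++}\lambda_{+000}\lambda_{+++-}\neq0$; in particular $\lambda_{+000},\lambda_{0+00},\lambda_{+-++},\lambda_{-+++}$ are all nonzero, so the $d_{i}$ above make sense and are nonzero. Moreover \eqref{cond:base} gives $abcd\neq0$, i.e.\ $\det P\neq0$, so $A_{0}$ itself is defined by the construction of the Introduction. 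Hence $u=Dv$ transforms \eqref{sys:zero} into \eqref{sys:one} with coefficient matrix \eqref{coef:A1_noap}, which is the assertion. The only real work here is the sign bookkeeping of the $\lambda$-products and the entrywise matching; I anticipate no conceptual obstacle, since Theorem~\ref{thm:main} has already singled out the right matrix. An essentially equivalent route would be to check that left eigenvectors of $A_{0}$ for the eigenvalues $\pm\lambda_{++--}$ obey \eqref{cond:v_c}, that the numbers $r_{k}=v^{k}_{-c}/v^{k}_{c}$ they yield satisfy \eqref{expr:r12by4}--\eqref{expr:r3by4}, and then to apply Proposition~\ref{prop:expr_a_kai} together with Theorem~\ref{thm:main}.
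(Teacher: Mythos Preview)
Your approach is essentially the same as the paper's: both compute the factorizations $(a\pm b\pm c)^2-d^2$ in terms of the $\lambda$'s, read off the entries of \eqref{coef:A1_noap} at \eqref{value:abcd}, and then exhibit an explicit diagonal matrix conjugating this to $A_{0}$. Your $D=\mathrm{diag}(1,d_{3}d_{4},d_{3},d_{4})$ with $d_{3}=2\lambda_{+000}/\lambda_{+-++}$, $d_{4}=2\lambda_{0+00}/\lambda_{-+++}$ is exactly the inverse of the paper's $D_{1}$ (they conjugate in the other direction, $D_{1}^{-1}A_{1}D_{1}=A_{0}$), so the computations match. The graph-theoretic framing of the consistency conditions $r_{ij}r_{ji}=1$ and $r_{13}r_{24}=r_{14}r_{23}$ is a nice organizational device not in the paper, but it does not change the substance of the argument.
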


\section{Proofs}
Recall
$A_{1}
=
\begin{pmatrix}
  aJ     & A^{\prime}_{12} \\
  A^{\prime}_{21} & bJ     \\
\end{pmatrix}$,
where $J=\begin{pmatrix} 1 & \\ & -1 \end{pmatrix}$ and
\begin{align*}
A^{\prime}_{12}
&=
\begin{pmatrix}
  \frac{(b-c)r_{2}-(b+c)r_{3}}{r_{1}-r_{2}}  
& \frac{(b+c)r_{2}-(b-c)r_{4}}{r_{2}-r_{1}}  
\\[\jot]
  \frac{(b-c)r_{1}-(b+c)r_{3}}{r_{2}-r_{1}}  
& \frac{(b+c)r_{1}-(b-c)r_{4}}{r_{1}-r_{2}}  
\end{pmatrix},
\\
A^{\prime}_{21}
&=
\begin{pmatrix}
  \frac{(a+c)r_{1}-(a-c)r_{4}}{r_{4}-r_{3}}  
& \frac{(a-c)r_{2}-(a+c)r_{4}}{r_{3}-r_{4}}  
\\[\jot]
  \frac{(a+c)r_{1}-(a-c)r_{3}}{r_{3}-r_{4}}  
& \frac{(a-c)r_{2}-(a+c)r_{3}}{r_{4}-r_{3}}  
\end{pmatrix}.
\end{align*}
Set
\begin{align*}
\varepsilon&=b(a+c)r_{1}+b(a-c)r_{2}-a(b+c)r_{3}-a(b-c)r_{4},
\\
\delta&=r_{1}r_{2}-r_{3}r_{4},
\end{align*}
and
\begin{equation*}
\varepsilon^{\prime}
=\frac{2\varepsilon}{(r_{1}-r_{2})(r_{3}-r_{4})}
, \quad
\delta^{\prime}
=\frac{2\delta}{(r_{1}-r_{2})(r_{3}-r_{4})}.
\end{equation*}

\begin{lemma}
\label{lem:A11}
Assume the conditions
\eqref{cond:base},
\eqref{cond:r} and \eqref{cond:d}.
When we write the submatrix $A_{11}(z)$ in \eqref{inv:z-A1}
in the form
\begin{equation*}
A_{11}(z)=z^3I_{2}+z^2Q_{11}+zR_{11}+S_{11},
\end{equation*}
we have
\begin{subequations}
\begin{align}
Q_{11}
&=aJ,
\label{expr:A11Q}
\\
R_{11}
&=\frac{1}{2}\bigl(a^2-b^2-c^2-d^2\bigr)I_{2}
+\varepsilon^{\prime}
 \begin{pmatrix}
   & r_{2} \\ r_{1} &  
 \end{pmatrix}
-b\delta^{\prime}
 \begin{pmatrix}
   c & a+c \\ a-c & -c 
 \end{pmatrix},
\label{expr:A11R}
\\
S_{11}
&=
bA^{\prime}_{12}JA^{\prime}_{21}
-ab^2J-2abc\delta^{\prime} I_{2}.
\label{expr:A11S}
\end{align}
\end{subequations}
Moreover, we have
\begin{equation*}
{A_{11}(z)}^{-1}
=\frac{1}{(z^2-b^2)(z^2-c^2)(z^2-d^2)}
\bigl(z^3I_{2}+z^2\tilde{Q}_{11}+z\tilde{R}_{11}+\tilde{S}_{11}\bigr),
\end{equation*}
where
\begin{subequations}
\begin{align}
\tilde{Q}_{11}
&=-aJ,
\label{expr:invA11Q}
\\
\tilde{R}_{11}
&=\frac{1}{2}\bigl(a^2-b^2-c^2-d^2\bigr)I_{2}
-\varepsilon^{\prime}
 \begin{pmatrix}
   & r_{2} \\ r_{1} & 
 \end{pmatrix}
+b\delta^{\prime}
 \begin{pmatrix}
   c & a+c \\ a-c & -c
 \end{pmatrix},
\label{expr:invA11R}
\\
\tilde{S}_{11}
&=
ab^2J-bA^{\prime}_{12}JA^{\prime}_{21}.
\label{expr:invA11S}
\end{align}
\end{subequations}
\end{lemma}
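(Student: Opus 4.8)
The plan is to compute the matrix $A_{11}(z)$ directly from the block structure of $(zI_4 - A_1)^{-1}$ and then extract the polynomial coefficients $Q_{11}$, $R_{11}$, $S_{11}$. Writing $A_1 = \begin{pmatrix} aJ & A'_{12} \\ A'_{21} & bJ \end{pmatrix}$, the $(1,1)$-block of the resolvent is governed by the Schur complement: $A_{11}(z) = \det(zI_4 - A_1)\cdot\bigl((zI_2 - aJ) - A'_{12}(zI_2 - bJ)^{-1}A'_{21}\bigr)^{-1}$, and since $\det(zI_4-A_1) = (z^2-c^2)(z^2-d^2)$ (because $A_1$ is similar to $\mathrm{diag}(cJ, dJ)$), together with $(zI_2-bJ)^{-1} = \frac{1}{z^2-b^2}(zI_2 + bJ)$, one gets the closed form
\begin{equation*}
A_{11}(z) = \frac{(z^2-c^2)(z^2-d^2)}{z^2-b^2}\,\mathrm{adj}\!\left((zI_2 - aJ) - \tfrac{1}{z^2-b^2}A'_{12}(zI_2+bJ)A'_{21}\right)\cdot\frac{z^2-b^2}{(z^2-c^2)(z^2-d^2)}.
\end{equation*}
After clearing denominators this reduces to showing $A_{11}(z) = (z^2-b^2)(zI_2-aJ) - A'_{12}(zI_2+bJ)A'_{21}$ — a degree-three polynomial in $z$, provided one checks that the leading $(z^2-c^2)(z^2-d^2)$ factor matches the normalization in \eqref{inv:z-A1}. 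So the first step is to establish this polynomial identity for $A_{11}(z)$, which makes $Q_{11}$, $R_{11}$, $S_{11}$ simply the coefficients of $z^2$, $z^1$, $z^0$ in $(z^2-b^2)(zI_2-aJ) - A'_{12}(zI_2+bJ)A'_{21}$.

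From that expansion, $Q_{11} = aJ$ is immediate (the $z^2$ term of $(z^2-b^2)(zI_2-aJ)$ is $-aJ z^2$... careful: $(z^2-b^2)(zI_2 - aJ) = z^3 I_2 - aJ z^2 - b^2 z I_2 + ab^2 J$, so the $z^2$ coefficient is $-aJ$, but $-A'_{12}(zI_2+bJ)A'_{21}$ contributes $-A'_{12}A'_{21}$ at order $z$ only... wait, $(zI_2+bJ)$ is degree one, so $A'_{12}(zI_2+bJ)A'_{21}$ has $z^1$ coefficient $A'_{12}A'_{21}$ and $z^0$ coefficient $bA'_{12}JA'_{21}$). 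Reconciling the sign with the stated $Q_{11}=aJ$ will fix a normalization convention (likely $A_{11}(z)$ as defined carries an overall sign from how \eqref{sabun:gg1} was extracted), so I would pin that down first. Then $R_{11}$ is $-b^2 I_2 - A'_{12}A'_{21}$ (up to sign), and the work is to verify this equals $\tfrac12(a^2-b^2-c^2-d^2)I_2 + \varepsilon'\begin{pmatrix} & r_2 \\ r_1 & \end{pmatrix} - b\delta'\begin{pmatrix} c & a+c \\ a-c & -c\end{pmatrix}$; similarly $S_{11}$ equals $ab^2 J - bA'_{12}JA'_{21}$ (up to sign), matching \eqref{expr:A11S}. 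Both reduce to computing the $2\times2$ product $A'_{12}A'_{21}$ explicitly in terms of $r_1,\dots,r_4$ and then repeatedly applying the constraint \eqref{cond:d} to collapse the quadratic-in-$r$ expressions; the role of $\varepsilon$, $\delta$ (hence $\varepsilon'$, $\delta'$) is precisely to package the off-diagonal and trace parts of $A'_{12}A'_{21}$ after \eqref{cond:d} is used.

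For the inverse ${A_{11}(z)}^{-1}$, since $A_{11}(z)$ is a $2\times 2$ matrix polynomial of degree three, I use $A_{11}(z)^{-1} = \mathrm{adj}(A_{11}(z))/\det(A_{11}(z))$. The claim $\det A_{11}(z) = (z^2-b^2)(z^2-c^2)(z^2-d^2)$ should follow from the block-determinant identity $\det(zI_4-A_1) = \det(zI_2-bJ)\cdot\det(A_{11}(z)/\det(zI_4-A_1))$ — rearranged, $\det A_{11}(z) = \det(zI_4-A_1)^2/\det(zI_2-bJ) = (z^2-c^2)^2(z^2-d^2)^2/(z^2-b^2)$, which is wrong dimensionally, so instead the correct bookkeeping is: the Schur complement formula gives $\det(zI_4-A_1) = \det(zI_2 - bJ)\cdot\det\bigl((zI_2-aJ) - A'_{12}(zI_2-bJ)^{-1}A'_{21}\bigr)$, and since $A_{11}(z) = \det(zI_4-A_1)\cdot\bigl((zI_2-aJ)-A'_{12}(zI_2-bJ)^{-1}A'_{21}\bigr)^{-1}$, we get $A_{11}(z)^{-1} = \det(zI_4-A_1)^{-1}\bigl((zI_2-aJ) - A'_{12}(zI_2-bJ)^{-1}A'_{21}\bigr)$, and multiplying through by $(z^2-b^2)$ yields exactly a degree-three polynomial over $(z^2-b^2)(z^2-c^2)(z^2-d^2)$. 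Then $\tilde Q_{11}$, $\tilde R_{11}$, $\tilde S_{11}$ are read off as the coefficients, and comparing with the formulas for $Q_{11}, R_{11}, S_{11}$ shows $\tilde Q_{11} = -Q_{11}$, $\tilde R_{11} = R_{11}$ with the sign flips on the $\varepsilon'$ and $\delta'$ terms, $\tilde S_{11} = -S_{11}$ — consistent with replacing $A_1$ by $-A_1$ in the appropriate sense, which one can phrase as a clean symmetry argument rather than recomputing.

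The main obstacle will be the explicit simplification of $A'_{12}A'_{21}$ and $A'_{12}JA'_{21}$: these are ratios of degree-two homogeneous polynomials in $r_1,\dots,r_4$ over $(r_1-r_2)(r_3-r_4)$, and turning them into the $\varepsilon', \delta'$ form demands using \eqref{cond:d} to eliminate one combination of the $r$'s. I would organize this by first computing the four entries of $A'_{12}A'_{21}$ as rational functions, then writing the numerator of each as a linear combination of $r_1r_2$, $r_3r_4$, $r_1r_3$, $r_1r_4$, $r_2r_3$, $r_2r_4$ with coefficients quadratic in $a,b,c$, and finally substituting the expression for $d^2$ from \eqref{cond:d} to see the $d^2$-dependence emerge and the residual terms organize into multiples of $\varepsilon$ and $\delta$. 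The trace computation $a^2 I_2$-part in \eqref{cond:d}'s derivation (the last display of the proof of Proposition \ref{prop:expr_a_kai}) is essentially the $R_{11}$ computation in disguise, so I would reuse that rather than redo it; the genuinely new content is separating the symmetric ($\delta'$, diagonal) from the antisymmetric ($\varepsilon'$, anti-diagonal) pieces.
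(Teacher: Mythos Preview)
Your Schur-complement route is different from the paper's argument (the paper uses the minimal polynomial $(A_{1}^{2}-c^{2}I_{4})(A_{1}^{2}-d^{2}I_{4})=O$ to write $(zI_{4}-A_{1})^{-1}$ as a cubic polynomial in $A_{1}$ over the characteristic polynomial, reads off the $(1,1)$-block, and then computes $\det A_{11}(z)$ separately via a block-matrix identity to get the inverse by $\adj/\det$). But your computation contains a concrete error, not merely a sign. The block-inversion formula gives the $(1,1)$-block of $(zI_{4}-A_{1})^{-1}$ as $M(z)^{-1}$ with $M(z)=(zI_{2}-aJ)-A'_{12}(zI_{2}-bJ)^{-1}A'_{21}$, so $A_{11}(z)=(z^{2}-c^{2})(z^{2}-d^{2})\,M(z)^{-1}$. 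Since the Schur determinant identity gives $\det M(z)=(z^{2}-c^{2})(z^{2}-d^{2})/(z^{2}-b^{2})$, and since $\adj(cX)=c\,\adj X$ for $2\times2$ matrices, this simplifies to
\[
A_{11}(z)=\adj\bigl((z^{2}-b^{2})(zI_{2}-aJ)-A'_{12}(zI_{2}+bJ)A'_{21}\bigr),
\]
\emph{not} the polynomial itself. The polynomial you wrote down is precisely $(z^{2}-b^{2})(z^{2}-c^{2})(z^{2}-d^{2})\,A_{11}(z)^{-1}$: its coefficients are $\tilde Q_{11},\tilde R_{11},\tilde S_{11}$, which is why you see $-aJ$ at order $z^{2}$ (that is $\tilde Q_{11}$, not $Q_{11}$). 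An overall sign flip cannot repair this, as it would also negate the $z^{3}I_{2}$ term. The fix is to apply the $2\times2$ adjugate $X\mapsto(\tr X)I_{2}-X$ coefficientwise; the trace terms thus introduced are exactly what turn your ``$-b^{2}I_{2}-A'_{12}A'_{21}$'' into the paper's $R_{11}=(a^{2}-c^{2}-d^{2})I_{2}+A'_{12}A'_{21}$ and produce the $-2abc\delta' I_{2}$ in $S_{11}$ (your ``$ab^{2}J-bA'_{12}JA'_{21}$'' is $\tilde S_{11}$, not $\pm S_{11}$).

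Once this swap is recognised, your approach is in fact tidier than the paper's: one Schur-complement computation delivers both the numerator of $A_{11}(z)^{-1}$ and, after taking the adjugate, $A_{11}(z)$ itself, so the relations $\tilde Q_{11}=-Q_{11}$, $\tilde R_{11}=(\tr R_{11})I_{2}-R_{11}$, $\tilde S_{11}=(\tr S_{11})I_{2}-S_{11}$ are immediate rather than recomputed. The remaining step --- expressing $A'_{12}A'_{21}$ in terms of $\varepsilon',\delta'$ via \eqref{cond:d} --- is the same as in the paper, where it is simply verified by computer algebra.
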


\begin{proof}
Since $\bigl({A_{1}}^2-c^2I_{4}\bigr)\bigl({A_{1}}^2-d^2I_{4}\bigr)=O$,
we have
\begin{equation*}
\bigl(zI_{4}-{A_{1}}^2\bigr)^{-1}
=\frac{1}{(z-c^2)(z-d^2)}\bigl\{\bigl(z-c^2-d^2\bigr)I_{4}+{A_{1}}^2\bigr\},
\end{equation*}
and hence
\begin{align*}
&
(zI_{4}-A_{1})^{-1}
=
(zI_{4}+A_{1})
\bigl(z^2I_{4}-{A_{1}}^2\bigr)^{-1}
\\
&=
\frac{1}{(z^2-c^2)(z^2-d^2)}(zI_{4}+A_{1})
\bigl\{\bigl(z^2-c^2-d^2\bigr)I_{4}+{A_{1}}^2\bigr\}
\\
&=
\frac{1}{(z^2-c^2)(z^2-d^2)}
  \bigl\{z^3I_{4}+z^2A_{1}+z\bigl({A_{1}}^2-\bigl(c^2+d^2\bigr)I_{4}\bigr)
  +{A_{1}}^3-\bigl(c^2+d^2\bigr)A_{1}\bigr\}.
\end{align*}
From the $(1,1)$-block of this formula we have
\begin{align*}
Q_{11}
&=aJ,
\\
R_{11}
&=
A^{\prime}_{12}A^{\prime}_{21}+\bigl(a^2-c^2-d^2\bigr)I_{2},
\\
S_{11}
&=
bA^{\prime}_{12}JA^{\prime}_{21}
+aJA^{\prime}_{12}A^{\prime}_{21}+aA^{\prime}_{12}A^{\prime}_{21}J
+a\bigl(a^2-c^2-d^2\bigr)J.
\end{align*}
Besides, with the aid of a computer algebra system,
we can easily check that
\begin{equation*}
A^{\prime}_{12}A^{\prime}_{21}
=\frac{1}{2}\bigl(c^2+d^2-a^2-b^2\bigr)I_{2}
+\varepsilon^{\prime}
 \begin{pmatrix}
   & r_{2} \\ r_{1} &
 \end{pmatrix}
-b\delta^{\prime}
 \begin{pmatrix}
   c & a+c \\ a-c & -c
 \end{pmatrix}
\end{equation*}
holds,
where we have used \eqref{cond:d} for $d^2$.
Combining these expressions,
we can obtain \eqref{expr:A11Q}--\eqref{expr:A11S}.

As for ${A_{11}(z)}^{-1}$, we have
\begin{equation*}
\begin{gathered}
{A_{11}(z)}^{-1}
=
\frac{1}{\det A_{11}(z)}\adj A_{11}(z),
\\
\adj A_{11}(z)
=
z^3I_{2}+z^2((\tr Q_{11})I_{2}-Q_{11})
+z((\tr R_{11})I_{2}-R_{11})+(\tr S_{11})I_{2}-S_{11}.
\end{gathered}
\end{equation*}
Here
the determinant of $A_{11}(z)$ follows from the equality
\begin{equation*}
\begin{pmatrix} A_{11}(z) & A_{12}(z) \\ & I_{2} \end{pmatrix}(zI_{4}-A_{1})
=
\begin{pmatrix} \bigl(z^2-c^2\bigr)\bigl(z^2-d^2\bigr)I_{2} & \\
                -A^{\prime}_{21} & zI_{2}-bJ \end{pmatrix}.
\end{equation*}
Taking the determinant of both sides, we have
\begin{equation*}
\det A_{11}(z)\cdot \det(zI_{4}-A_{1})
=\bigl(z^2-c^2\bigr)^2\bigl(z^2-d^2\bigr)^2\cdot \det(zI_{2}-bJ)
\end{equation*}
and hence
\begin{equation*}
\det A_{11}(z)=\bigl(z^2-b^2\bigr)\bigl(z^2-c^2\bigr)\bigl(z^2-d^2\bigr).
\end{equation*}
From \eqref{expr:A11Q}--\eqref{expr:A11S}
it is trivial that
\begin{equation*}
\tr Q_{11}=0, \quad
\tr R_{11}=a^2-b^2-c^2-d^2, \quad
\tr S_{11}=b\cdot \tr\bigl(A^{\prime}_{12}JA^{\prime}_{21}\bigr)
           -4abc\delta^{\prime}
\end{equation*}
hold.
By direct calculation
we can obtain $\tr\bigl(A^{\prime}_{12}JA^{\prime}_{21}\bigr)
=2ac\delta^{\prime}$ and hence
\begin{equation*}
\tr S_{11}=-2abc\delta^{\prime}.
\end{equation*}
Substituting these,
we obtain \eqref{expr:invA11Q}--\eqref{expr:invA11S}.
This completes the proof.
\end{proof}

\begin{lemma} \label{lem:cond_same}
Assume the conditions
\eqref{cond:base},
\eqref{cond:r} and \eqref{cond:d}.
The systems \eqref{sabun:gg1} and \eqref{sabun:hh1}
are 
substantially the same 
if and only if the condition
\begin{equation}
\varepsilon=0
\quad \mbox{and} \quad 
\delta=0
\label{cond:lm}
\end{equation}
holds.
\end{lemma}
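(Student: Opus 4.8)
The plan is to unwind Definition~\ref{def:subst_same} for the two size-two systems \eqref{sabun:gg1} and \eqref{sabun:hh1} and show that "substantially the same" forces, and is forced by, $\varepsilon=\delta=0$. Writing \eqref{sabun:gg1} as $\tilde g_1(z) = M(z)\,\tilde g_1(z+1)$ with
\[
M(z) = -\frac{z+1}{(z^2-c^2)(z^2-d^2)}\,A_{11}(z),
\]
we first pass to a forward recursion. Using Lemma~\ref{lem:A11}, the inverse $A_{11}(z)^{-1}$ is available explicitly, so \eqref{sabun:gg1} is equivalent to
\[
\tilde g_1(z+1)
= -\frac{(z^2-c^2)(z^2-d^2)}{z+1}\,A_{11}(z)^{-1}\,\tilde g_1(z)
= -\frac{1}{(z+1)(z^2-b^2)}\bigl(z^3 I_2 + z^2\tilde Q_{11}+z\tilde R_{11}+\tilde S_{11}\bigr)\tilde g_1(z).
\]
Meanwhile \eqref{sabun:hh1} reads
\[
\tilde h_1(z+1) = \frac{z-1}{(z^2-c^2)(z^2-d^2)}\,A_{11}(-z)\,\tilde h_1(z).
\]
Both are now in the form $f(z+1)=B(z)f(z)$, $h(z+1)=C(z)h(z)$, with $2\times2$ rational coefficient matrices $B(z)$, $C(z)$, so by Definitions~\ref{def:essen_same}–\ref{def:subst_same} "substantially the same" means exactly that there is a scalar $\Gamma(z)$ with $\Gamma(z+1)=g(z)\Gamma(z)$ (for a rational $g$) such that $C(z) = \Gamma(z)\,B(z)\,\Gamma(z+1)^{-1} = g(z)^{-1}B(z)$. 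Since $\Gamma(z) I_2$ commutes with everything, this simply says $C(z) = \phi(z) B(z)$ for a single rational function $\phi(z)=g(z)^{-1}$.

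Thus the whole statement reduces to: $C(z)=\phi(z)B(z)$ for some rational $\phi$ if and only if $\varepsilon=\delta=0$. The "only if" direction: if $C(z)=\phi(z)B(z)$ then in particular the matrices $B(z)$ and $C(z)$ are scalar multiples of one another, hence for every $z$ their columns are proportional, so all $2\times2$ minor-type cross-ratios of $B(z)$ and $C(z)$ agree; equivalently, the normalized matrix $\hat B(z) := B(z)/(\text{its }(1,1)\text{ entry})$ equals $\hat C(z)$. Writing these out using the explicit cubic $z^3I_2+z^2\tilde Q_{11}+z\tilde R_{11}+\tilde S_{11}$ from Lemma~\ref{lem:A11} for $B$, and the explicit cubic for $A_{11}(-z)$ (just negate the odd-degree coefficients $Q_{11}, R_{11}, S_{11}$ of $A_{11}(z)$ and flip signs of $z$) for $C$, the equality $\hat B(z)=\hat C(z)$ is a polynomial identity in $z$ with coefficients that are rational in $a,b,c,d,r_1,\dots,r_4$. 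Matching coefficients, one finds that the off-diagonal entries of $R_{11}$ and $\tilde R_{11}$ — which carry the factor $\varepsilon'$ via $\begin{pmatrix}&r_2\\r_1&\end{pmatrix}$ and the factor $b\delta'$ — must drop out, and tracking which coefficients must vanish gives precisely $\varepsilon'=\delta'=0$, i.e. $\varepsilon=\delta=0$ (using $r_1-r_2\ne0$, $r_3-r_4\ne0$ from \eqref{cond:r}). Conversely, for the "if" direction: setting $\varepsilon=\delta=0$ in Lemma~\ref{lem:A11} collapses $R_{11}$ to $\tfrac12(a^2-b^2-c^2-d^2)I_2$ and forces $S_{11}$ to a scalar multiple of $J$ (since $A'_{12}JA'_{21}$ and the $\delta'$-term both simplify), so both $A_{11}(z)$ and $A_{11}(-z)$ become explicit, and one checks directly that $C(z)/B(z)$ is a scalar rational function $\phi(z)$ of $z$ — at which point $\Gamma(z)$ is recovered as any solution of $\Gamma(z+1)=\phi(z)^{-1}\Gamma(z)$, establishing that the systems are substantially the same.

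The main obstacle I expect is bookkeeping rather than conceptual: one must carefully translate "proportional coefficient matrices" into a clean finite set of polynomial conditions and verify that among all the coefficient-matching equations the independent content is exactly $\varepsilon=0$ and $\delta=0$ — no more (so the stated condition is sufficient) and no less (so it is necessary). A convenient device is to look at the $(1,2)$ entry of $R_{11}$ and of $\tilde R_{11}$: by \eqref{expr:A11R}–\eqref{expr:invA11R} these are $\varepsilon' r_2$ and $-\varepsilon' r_2$ respectively after removing the common $b\delta'$ piece, and similarly the $(2,1)$ entries; combined with the scalar-part comparison this isolates $\varepsilon'$ and $b\delta'$ individually. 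Since $b\ne0$ by \eqref{cond:base}, $b\delta'=0\iff\delta'=0\iff\delta=0$. Once $\varepsilon=\delta=0$ is pinned down, the remaining verification that $C(z)=\phi(z)B(z)$ holds identically (not merely that the necessary conditions were met) is a short direct computation, which one can hand to a computer algebra system exactly as in Lemma~\ref{lem:A11}. This lemma then feeds into Theorem~\ref{thm:main} by re-expressing $\varepsilon=\delta=0$ as the explicit relations \eqref{expr:r12by4}–\eqref{expr:r4by3} among the $r_k$.
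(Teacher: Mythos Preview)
Your approach is the paper's: rewrite both recursions in forward form via Lemma~\ref{lem:A11}, reduce ``substantially the same'' to the two $2\times2$ coefficient matrices being scalar rational multiples, and extract $\varepsilon=\delta=0$ from the proportionality conditions $c_{jk}(z)b_{lm}(z)-b_{jk}(z)c_{lm}(z)=0$. The paper carries out the ``only if'' direction by reading off the $z^4$-coefficients of three such cross-products, obtaining $-4bc\delta'$, $2\{r_2\varepsilon'-(a+c)b\delta'\}$, $2\{r_1\varepsilon'-(a-c)b\delta'\}$, from which $\delta'=0$ and then $\varepsilon'=0$ follow immediately.

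One point in your ``if'' direction should be corrected: you claim that $\varepsilon=\delta=0$ forces $S_{11}$ to be a scalar multiple of $J$, but this is neither justified nor what is needed. The relevant observation (which the paper uses) is simpler: from \eqref{expr:A11R}--\eqref{expr:invA11S} one has $R_{11}-\tilde R_{11}=2\varepsilon'\bigl(\begin{smallmatrix}&r_2\\r_1&\end{smallmatrix}\bigr)-2b\delta'\bigl(\begin{smallmatrix}c&a+c\\a-c&-c\end{smallmatrix}\bigr)$ and $S_{11}+\tilde S_{11}=-2abc\delta'I_2$, so $\varepsilon'=\delta'=0$ gives $\tilde R_{11}=R_{11}$ and $\tilde S_{11}=-S_{11}$, whence the two cubic matrix factors $z^3I_2-z^2aJ+z\tilde R_{11}+\tilde S_{11}$ and $z^3I_2-z^2aJ+zR_{11}-S_{11}$ are literally equal and the systems differ only by scalar prefactors---no further structure of $S_{11}$ is required.
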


\begin{proof}
By Lemma \ref{lem:A11}
we can write
the systems \eqref{sabun:gg1} and \eqref{sabun:hh1}
in the form
\begin{align*}
\tilde{g}_{1}(z+1)
&=
-\frac{1}{(z+1)(z^2-b^2)}
\bigl(z^3I_{2}-z^2aJ+z\tilde{R}_{11}+\tilde{S}_{11}\bigr)
\tilde{g}_{1}(z)
, \\
\tilde{h}_{1}(z+1)
&=
-\frac{z-1}{(z^2-c^2)(z^2-d^2)}
\bigl(z^3I_{2}-z^2aJ+zR_{11}-S_{11}\bigr)
\tilde{h}_{1}(z).
\end{align*}
From these expressions it is obvious
that the systems \eqref{sabun:gg1} and \eqref{sabun:hh1}
are substantially the same if \eqref{cond:lm} holds. 
So, we only have to show that the condition \eqref{cond:lm} holds
if \eqref{sabun:gg1} and \eqref{sabun:hh1} are substantially the same.
Throughout the proof,
we write
\begin{align*}
z^3I_{2}-z^2aJ+z\tilde{R}_{11}+\tilde{S}_{11}
&=
\begin{pmatrix}
  b_{11}(z) & b_{12}(z) \\
  b_{21}(z) & b_{22}(z) \\
\end{pmatrix},
\\
z^3I_{2}-z^2aJ+zR_{11}-S_{11}
&=
\begin{pmatrix}
  c_{11}(z) & c_{12}(z) \\
  c_{21}(z) & c_{22}(z) \\
\end{pmatrix}.
\end{align*}
Assume that
the systems \eqref{sabun:gg1} and \eqref{sabun:hh1}
are substantially the same.
Then we have
\begin{equation*}
c_{jk}(z)b_{lm}(z)-b_{jk}(z)c_{lm}(z)=0
\quad (j,k,l,m=1,2).
\end{equation*}
By direct calculation we obtain
\begin{align*}
c_{11}(z)b_{22}(z)-b_{11}(z)c_{22}(z)
&=-4bc\delta^{\prime}z^4+\cdots,
\\
c_{12}(z)b_{22}(z)-b_{12}(z)c_{22}(z)
&=2 \bigl\{r_{2}\varepsilon^{\prime}-(a+c)b\delta^{\prime}\bigr\}z^4+\cdots,
\\
c_{21}(z)b_{22}(z)-b_{21}(z)c_{22}(z)
&=2 \bigl\{r_{1}\varepsilon^{\prime}-(a-c)b\delta^{\prime}\bigr\}z^4+\cdots,
\end{align*}
which gives $\delta^{\prime}=0$ and $\varepsilon^{\prime}=0$.
This completes the proof.
\end{proof}

\begin{proof}[Proof of Theorem \ref{thm:main}]
It is easy to check that 
by substituting \eqref{expr:r12by4}--\eqref{expr:r3by4}
or \eqref{expr:r12by3}--\eqref{expr:r4by3}
into \eqref{coef:A1}
we obtain \eqref{coef:A1_noap}.
So, by Lemma \ref{lem:cond_same} we only have to show that
either or both of
\eqref{expr:r12by4}--\eqref{expr:r3by4}
and \eqref{expr:r12by3}--\eqref{expr:r4by3}
holds if and only if \eqref{cond:lm} holds.

First, we assume \eqref{cond:lm}. 
Using $\varepsilon$ and $\delta$,
we can write the numerator of the left hand side of
\eqref{cond:d} as
\begin{align*}
&
(r_{1}-r_{2})(r_{3}-r_{4})d^2
\\
&=
 (a+b+c)^2r_{1}r_{3}
-(a-b+c)^2r_{1}r_{4}
-(a-b-c)^2r_{2}r_{3}
+(a+b-c)^2r_{2}r_{4}
\\
&\phantom{{}={}}
{}
-4abr_{1}r_{2}
-4abr_{3}r_{4}
\\
&
=
\bigl\{
 4b(a+c)r_{1}
+(a-b-c)^2r_{3}
-(a+b-c)^2r_{4}
\bigr\}(r_{1}-r_{2})
-4r_{1}\varepsilon
+4ab\delta
\\
&
=
\bigl\{
-4b(a-c)r_{2}
+(a+b+c)^2r_{3}
-(a-b+c)^2r_{4}
\bigr\}(r_{1}-r_{2})
-4r_{2}\varepsilon
+4ab\delta.
\end{align*}
When $\varepsilon=0$, $\delta=0$ and $r_{1}-r_{2}\ne0$,
we have 
\begin{subequations}
\begin{align}
r_{1}
&
=-\frac{
 \bigl((a-b-c)^2-d^2\bigr)r_{3}
-\bigl((a+b-c)^2-d^2\bigr)r_{4}
}{4b(a+c)},
\label{expr:r1}
\\
r_{2}
&
=\frac{
 \bigl((a+b+c)^2-d^2\bigr)r_{3}
-\bigl((a-b+c)^2-d^2\bigr)r_{4}
}{4b(a-c)}.
\label{expr:r2}
\end{align}
\end{subequations}
Substituting these into $\delta=r_{1}r_{2}-r_{3}r_{4}$,
we have
\begin{equation*}
\delta
=
-\frac{
\begin{aligned}
&
\bigl\{
\bigl((a+b+c)^2-d^2\bigr)\bigl((a-b-c)^2-d^2\bigr)r_{3}
\\[-\jot]
&
\qquad\qquad
{}-
\bigl((a+b-c)^2-d^2\bigr)\bigl((a-b+c)^2-d^2\bigr)r_{4}
\bigr\}
(r_{3}-r_{4})
\end{aligned}
}{16b^2(a^2-c^2)}.
\end{equation*}
When $\delta=0$ and $r_{3}-r_{4}\ne0$,
we obtain \eqref{expr:r3by4}
and/or \eqref{expr:r4by3}.
Here the coefficient of $r_{3}$ and that of $r_{4}$
never vanish simultaneously, since
\begin{equation*}
\bigl((a+\iota_{1} b-\iota_{2} c)^2-d^2\bigr)
\bigl((a-\iota_{1} b+\iota_{2} c)^2-d^2\bigr)
=\iota_{1}\iota_{2}16bc
 (a+\iota_{1} b)(a+\iota_{2} c)
\ne0
\end{equation*}
if $(a+\iota_{1} b+\iota_{2} c)^2-d^2=0$,
where
$\iota_{j}=+$ or $-$ $(j=1,2)$. 
Substituting \eqref{expr:r3by4} or \eqref{expr:r4by3} 
into \eqref{expr:r1}--\eqref{expr:r2}, 
we obtain \eqref{expr:r12by4} or \eqref{expr:r12by3}, 
respectively.

Inversely, we assume
\eqref{expr:r12by4}--\eqref{expr:r3by4}
or \eqref{expr:r12by3}--\eqref{expr:r4by3}.
By direct calculation we can easily check 
that $\varepsilon=0$ and $\delta=0$ hold.
This completes the proof.
\end{proof}

\begin{proof}[Proof of Theorem \ref{thm:realize}]
First, substituting 
\eqref{value:abcd} into
\eqref{cond:abcd_a}, \eqref{cond:abcd_b},
we have
\begin{align*}
\bigl((a+b+c)^2-d^2\bigr)\bigl((a-b-c)^2-d^2\bigr)
&=
64\lambda_{0+00}\lambda_{00+0}\lambda_{+-++}\lambda_{++-+}\ne0,
\\
\bigl((a+b-c)^2-d^2\bigr)\bigl((a-b+c)^2-d^2\bigr)
&=
64\lambda_{+000}\lambda_{000+}\lambda_{-+++}\lambda_{+++-}\ne0.
\end{align*}
Substituting 
\eqref{value:abcd} into
\eqref{coef:A1_noap},
we have
\begin{equation*}
A_{1}
=
\begin{pmatrix}
\lambda_{++++} J &
   \begin{matrix}
      \frac{2\lambda_{+000}\lambda_{+++-}}{\lambda_{++++}}
    & \frac{2\lambda_{0+00}\lambda_{++-+}}{\lambda_{++++}}
   \\[\jot]
      \frac{2\lambda_{000+}\lambda_{-+++}}{\lambda_{----}}
    & \frac{2\lambda_{00+0}\lambda_{+-++}}{\lambda_{----}}
   \end{matrix}
\\
\begin{matrix}
      \frac{2\lambda_{00+0}\lambda_{+-++}}{\lambda_{-+-+}}
    & \frac{2\lambda_{0+00}\lambda_{++-+}}{\lambda_{-+-+}}
  \\[\jot]
      \frac{2\lambda_{000+}\lambda_{-+++}}{\lambda_{+-+-}}
    & \frac{2\lambda_{+000}\lambda_{+++-}}{\lambda_{+-+-}}
\end{matrix}
 & \lambda_{-+-+} J
\end{pmatrix}.
\end{equation*}
For this $A_{1}$, taking
\begin{equation*}
D_{1}
=\begin{pmatrix}
   1 & & & \\
   & \frac{\lambda_{-+++}\lambda_{+-++}}{4\lambda_{+000}\lambda_{0+00}} & & \\
   & & \frac{\lambda_{+-++}}{2\lambda_{+000}} & \\
   & & & \frac{\lambda_{-+++}}{2\lambda_{0+00}} \\
 \end{pmatrix},
\end{equation*}
we see that ${D_{1}}^{-1}A_{1}D_{1}$ agrees with
the coefficient matrix $A_{0}$ of \eqref{sys:zero}.
Namely, the transformation $y=D_{1}u$ changes
\eqref{sys:one} to \eqref{sys:zero}.
This completes the proof.
\end{proof}

\section{Some remarks}

\subsection{Local solutions near another singular point}
We can obtain the same result as 
Theorem \ref{thm:main}
from local solutions
\begin{alignat*}{3}
y(x)&=\sum_{r=0}^{\infty}g(r)x^{r+\rho}, & \quad
\rho&=0,\,\pm a, & \quad
\mbox{near } x&=0
\\
\intertext{and}
y(x)&=\sum_{s=0}^{\infty}h(s)x^{-s-\sigma}, & \quad
\sigma&=\pm c,\,\pm d, & \quad
\mbox{near } x&=\infty.
\end{alignat*}
For these solutions,
similarly to \eqref{sabun:g_z} and \eqref{sabun:h_z},
we have
\begin{alignat*}{2}
(z+1)T\hat{g}(z+1)&=(zI_{4}-A_{1})\hat{g}(z) & \quad
\mbox{for } \hat{g}(z)&=g(z-\rho)
\\
\intertext{and}
(zI_{4}+A_{1})\hat{h}(z+1)&=(z-1)T\hat{h}(z) & \quad
\mbox{for } \hat{h}(z)&=h(z-1-\sigma).
\end{alignat*}
Moreover, 
since $T=
\begin{pmatrix}
  0I_{2} &       \\
    & 1I_{2}     \\
   \end{pmatrix}$,
writing
$\hat{g}(z)
=\begin{pmatrix} \hat{g}_{1}(z) \\ \hat{g}_{2}(z) \end{pmatrix}$
and
$\hat{h}(z)
=\begin{pmatrix} \hat{h}_{1}(z) \\ \hat{h}_{2}(z) \end{pmatrix}$,
we have
\begin{subequations}
\begin{align}
\frac{z+1}{(z^2-c^2)(z^2-d^2)}
A_{12}(z)\hat{g}_{2}(z+1)
&=\hat{g}_{1}(z),
\label{sabun:gg3}
   \\ 
\frac{z+1}{(z^2-c^2)(z^2-d^2)}
A_{22}(z)\hat{g}_{2}(z+1)
&=\hat{g}_{2}(z)
\label{sabun:gg4}
\end{align}
\end{subequations}
and
\begin{subequations}
\begin{align}
\hat{h}_{1}(z+1)
=-\frac{z-1}{(z^2-c^2)(z^2-d^2)}
A_{12}(-z)\hat{h}_{2}(z),
\label{sabun:hh3}
   \\ 
\hat{h}_{2}(z+1)
=-\frac{z-1}{(z^2-c^2)(z^2-d^2)}
A_{22}(-z)\hat{h}_{2}(z),
\label{sabun:hh4}
\end{align}
\end{subequations}
where $A_{12}(z)$ and $A_{22}(z)$ are the submatrices in \eqref{inv:z-A1}.
In the same way as the proof of Theorem \ref{thm:main}
we can prove the following theorem.

\begin{theorem}
Suppose that the system \eqref{sys:one} satisfies
\eqref{cond:base},
\eqref{cond:r} and \eqref{cond:d}.
The systems \eqref{sabun:gg4} and \eqref{sabun:hh4}
for \eqref{sys:one} 
are substantially the same if and only if
the coefficient matrix $A_{1}$
in \eqref{sys:one} 
is equal to
\eqref{coef:A1_noap}.
\end{theorem}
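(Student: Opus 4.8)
The plan is to mimic the proof of Theorem \ref{thm:main} step by step, replacing the blocks $A_{11}(z)$, $A_{21}(z)$ by the blocks $A_{22}(z)$, $A_{12}(z)$ throughout. First I would establish the analogue of Lemma \ref{lem:A11}: writing $A_{22}(z)=z^3I_2+z^2Q_{22}+zR_{22}+S_{22}$, extract these coefficients from the $(2,2)$-block of the formula
\begin{equation*}
(zI_4-A_1)^{-1}=\frac{1}{(z^2-c^2)(z^2-d^2)}\bigl\{z^3I_4+z^2A_1+z\bigl(A_1^2-(c^2+d^2)I_4\bigr)+A_1^3-(c^2+d^2)A_1\bigr\},
\end{equation*}
which gives $Q_{22}=bJ$, $R_{22}=A'_{21}A'_{12}+(b^2-c^2-d^2)I_2$, and an analogous cubic expression for $S_{22}$. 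A computer-algebra check (using \eqref{cond:d} to eliminate $d^2$) yields $A'_{21}A'_{12}$ as an explicit combination of $I_2$, $\begin{pmatrix} & r_4 \\ r_3 & \end{pmatrix}$ and a matrix with entries built from $a,b,c$, with the same $\varepsilon'$, $\delta'$ appearing (by the symmetry $a\leftrightarrow b$, $(r_1,r_2)\leftrightarrow(r_3,r_4)$ one expects essentially the formula for $A'_{12}A'_{21}$ with $a$ and $b$ swapped). From $\det A_{22}(z)=(z^2-a^2)(z^2-c^2)(z^2-d^2)$ — obtained by the same block-triangular determinant trick, now completing in the other order — one reads off $A_{22}(z)^{-1}$ via the adjugate, exactly as in Lemma \ref{lem:A11}.

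Next I would prove the analogue of Lemma \ref{lem:cond_same}: using the two cubic matrices just computed, rewrite \eqref{sabun:gg4} and \eqref{sabun:hh4} as
\begin{align*}
\hat g_2(z+1)&=\frac{1}{(z+1)(z^2-a^2)}\bigl(z^3I_2+z^2bJ+z\tilde R_{22}+\tilde S_{22}\bigr)\hat g_2(z),
\\
\hat h_2(z+1)&=-\frac{z-1}{(z^2-c^2)(z^2-d^2)}\bigl(z^3I_2+z^2bJ+zR_{22}-S_{22}\bigr)\hat h_2(z),
\end{align*}
and observe that they are substantially the same precisely when the two $2\times2$ matrix polynomials are proportional as functions of $z$, i.e. when the off-diagonal parts vanish. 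Forming the cross-products $c_{jk}(z)b_{lm}(z)-b_{jk}(z)c_{lm}(z)$ of their entries and comparing leading ($z^4$) coefficients, one gets, as before, the vanishing of $\delta'$ and of the combinations $r_3\varepsilon'-(b+c)a\delta'$ and $r_4\varepsilon'-(b-c)a\delta'$ (the roles of $a$ and $b$, and of the pairs of $r$'s, are interchanged relative to Lemma \ref{lem:cond_same}); since $r_3,r_4$ are not both zero by \eqref{cond:r}, this forces $\varepsilon=\delta=0$, and conversely if $\varepsilon=\delta=0$ the off-diagonal terms are identically zero so the systems are substantially the same. Thus \eqref{sabun:gg4} and \eqref{sabun:hh4} are substantially the same iff \eqref{cond:lm} holds — the very same condition as in Lemma \ref{lem:cond_same}.

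Finally, the conclusion is immediate: Theorem \ref{thm:main} already shows that \eqref{cond:lm} is equivalent to $A_1$ being given by \eqref{coef:A1_noap} (equivalently, to the $r_k$ satisfying \eqref{expr:r12by4}--\eqref{expr:r3by4} or \eqref{expr:r12by3}--\eqref{expr:r4by3}), so combining this with the analogue of Lemma \ref{lem:cond_same} just proved gives the theorem. The main obstacle I anticipate is purely computational: verifying the closed form of $A'_{21}A'_{12}$ and of $S_{22}$, and tracking the leading coefficients of the three relevant cross-products, are the genuine work — but each is a finite rational-function identity in $a,b,c,d,r_1,\dots,r_4$ (with $d^2$ eliminated through \eqref{cond:d}) of the same type already dispatched "with the aid of a computer algebra system" in Lemma \ref{lem:A11}, so no new idea is needed beyond the $a\leftrightarrow b$, $(r_1,r_2)\leftrightarrow(r_3,r_4)$ symmetry that organizes the bookkeeping.
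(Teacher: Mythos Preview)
Your proposal is correct and follows exactly the route the paper indicates: the paper's own proof consists of the single sentence ``In the same way as the proof of Theorem \ref{thm:main} we can prove the following theorem,'' and your plan---replacing $A_{11},A_{21}$ by $A_{22},A_{12}$, proving the analogue of Lemma \ref{lem:A11} (with $\det A_{22}(z)=(z^2-a^2)(z^2-c^2)(z^2-d^2)$), then the analogue of Lemma \ref{lem:cond_same}, and finally invoking the already-established equivalence of \eqref{cond:lm} with \eqref{coef:A1_noap}---is precisely that. A couple of minor sign/index slips (the $z^2$-coefficient coming from the adjugate should be $-bJ$ rather than $+bJ$, and the pairing of $r_3,r_4$ with $(b\pm c)$ in the $z^4$-coefficients is transposed) are inconsequential typos that do not affect the argument.
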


\subsection{Relation with the equation treated by Ebisu et~al.}
The present work is inspired by Ebisu et~al.~\cite{EHKOSY19}.
They investigated the scalar differential equation of the fourth order
satisfied by
\begin{equation}
y=
x^{-A_{0}}
{}_{2}F_{1}(A_{-+-+},A_{-++-},1-A_{0};x)
{}_{2}F_{1}(A_{----},A_{--++},1-A_{0};x)
\label{prod:EHKOSY}
\end{equation}
under the parametrization
\begin{equation*}
A_{\varepsilon_{0}\varepsilon_{1}\varepsilon_{2}\varepsilon_{3}}
=\frac{1}{2}\{
      (\varepsilon_{0}A_{0})
     +(\varepsilon_{1}A_{1})
     +(\varepsilon_{2}A_{2})
     +(\varepsilon_{3}A_{3})+1\}, \quad \varepsilon_{j}=\pm
\end{equation*}
in their notation.
Note that the product \eqref{prod:EHKOSY} is equivalent to
\begin{equation}
y=
x^{\gamma^{\prime}-1}
{}_{2}F_{1}\bigl(\alpha_{1},\beta_{1},\gamma^{\prime};x\bigr)
{}_{2}F_{1}\bigl(\alpha_{2},\beta_{2},\gamma^{\prime};x\bigr)
\label{prod:EHKOSYequiv}
\end{equation}
with the condition
\begin{equation}
\gamma^{\prime}=\frac{1}{2}(\alpha_{1}+\alpha_{2}+\beta_{1}+\beta_{2}),
\label{cond:EHKOSY}
\end{equation}
which is different from the condition \eqref{cond:Okubo}.

We here explain the relation between 
our system and their equation.
For the system \eqref{sys:zero} we change the variable $u$ to $v$ by 
\begin{equation*}
u=R^{-1}v, \quad
R=
\begin{pmatrix}
1 & \frac{\lambda_{-+++}\lambda_{+-++}}{4\lambda_{+000}\lambda_{0+00}}
  & \frac{\lambda_{+-++}}{2\lambda_{+000}} 
  & \frac{\lambda_{-+++}}{2\lambda_{0+00}} 
\\[\jot]
1 & \frac{\lambda_{++-+}\lambda_{+++-}}{4\lambda_{00+0}\lambda_{000+}}
  & \frac{\lambda_{+++-}}{2\lambda_{00+0}} 
  & \frac{\lambda_{++-+}}{2\lambda_{000+}} 
\\[\jot]
1 & \frac{\lambda_{-+++}\lambda_{+++-}}{4\lambda_{+000}\lambda_{000+}}
  & \frac{\lambda_{+++-}}{2\lambda_{+000}} 
  & \frac{\lambda_{-+++}}{2\lambda_{000+}} 
\\[\jot]
1 & \frac{\lambda_{+-++}\lambda_{++-+}}{4\lambda_{0+00}\lambda_{00+0}} 
  & \frac{\lambda_{+-++}}{2\lambda_{00+0}} 
  & \frac{\lambda_{++-+}}{2\lambda_{0+00}}
\end{pmatrix}
\end{equation*}
under the conditions
$\lambda_{+000}\lambda_{0+00}\lambda_{00+0}\lambda_{000+}\ne0$ and
\begin{equation*}
\det R=\frac{\lambda_{++++}\lambda_{++--}\lambda_{+-+-}\lambda_{+--+}
              {\lambda_{+0-0}}^2{\lambda_{0+0-}}^2}
             {16{\lambda_{+000}}^2{\lambda_{0+00}}^2
                {\lambda_{00+0}}^2{\lambda_{000+}}^2}\ne0.
\end{equation*}
The matrix $R$ consists of left eigenvectors of
the coefficient matrix $A_{0}$ of \eqref{sys:zero} to satisfy
\begin{equation*}
RA_{0}R^{-1}=
\begin{pmatrix}
\lambda_{++--} J &                \\
              & \lambda_{+--+} J  \\
\end{pmatrix}.
\end{equation*}
Then $v$ satisfies the system of differential equations
\begin{equation}
\frac{dv}{dx}
=R(xI_{4}-T)^{-1}R^{-1}
\begin{pmatrix}
\lambda_{++--} J &                \\
              & \lambda_{+--+} J  \\
\end{pmatrix}
v.
\label{sys:v}
\end{equation}
Besides, 
by direct calculation
we have
\begin{equation*}
RP^{-1}=
\begin{pmatrix}
  1 & \frac{1}{\alpha_{1}} & \frac{1}{\alpha_{2}}
    & \frac{1}{\alpha_{1}\alpha_{2}}
\\[\jot]
  1 & \frac{1}{\beta_{1}} & \frac{1}{\beta_{2}}
    & \frac{1}{\beta_{1}\beta_{2}}
\\[\jot]
  1 & \frac{1}{\alpha_{1}} & \frac{1}{\beta_{2}}
    & \frac{1}{\alpha_{1}\beta_{2}}
\\[\jot]
  1 & \frac{1}{\beta_{1}} & \frac{1}{\alpha_{2}}
    & \frac{1}{\beta_{1}\alpha_{2}}
\end{pmatrix}.
\end{equation*}
For $w=\begin{pmatrix}
 f_{1}f_{2} \\ x{f_{1}}'f_{2} \\ x f_{1}{f_{2}}' \\ x^2{f_{1}}'{f_{2}}'
 \end{pmatrix}$,
$f_{j}={}_{2}F_{1}(\alpha_{j},\beta_{j},\gamma_{j};x)$ $(j=1,2)$,
using the relation
\begin{equation*}
f_{j}+\frac{x}{\alpha_{j}}{f_{j}}^{\prime}
={}_{2}F_{1}(\alpha_{j}+1,\beta_{j},\gamma_{j};x)
, \quad
f_{j}+\frac{x}{\beta_{j}}{f_{j}}^{\prime}
={}_{2}F_{1}(\alpha_{j},\beta_{j}+1,\gamma_{j};x),
\end{equation*}
we have
\begin{align}
v
&=x^{\gamma_{1}-1}RP^{-1}w
\notag
\\
&=x^{\gamma_{1}-1}
\begin{pmatrix}
{}_{2}F_{1}(\alpha_{1}+1,\beta_{1},\gamma_{1};x)
{}_{2}F_{1}(\alpha_{2}+1,\beta_{2},\gamma_{2};x) \\
{}_{2}F_{1}(\alpha_{1},\beta_{1}+1,\gamma_{1};x)
{}_{2}F_{1}(\alpha_{2},\beta_{2}+1,\gamma_{2};x) \\
{}_{2}F_{1}(\alpha_{1}+1,\beta_{1},\gamma_{1};x)
{}_{2}F_{1}(\alpha_{2},\beta_{2}+1,\gamma_{2};x) \\
{}_{2}F_{1}(\alpha_{1},\beta_{1}+1,\gamma_{1};x)
{}_{2}F_{1}(\alpha_{2}+1,\beta_{2},\gamma_{2};x)
\end{pmatrix},
\label{sol:v}
\end{align}
where
the parameters 
$\alpha_{j}$, $\beta_{j}$, $\gamma_{j}$ ($j=1,2$)
satisfy the condition \eqref{cond:Okubo},
which is written in the form
\begin{equation*}
\gamma_{1}
=\gamma_{2}
=\frac12(\alpha_{1}+1+\alpha_{2}+1+\beta_{1}+\beta_{2}),
\mbox{~etc.}
\end{equation*}
This means that the scalar differential equation of the fourth order 
satisfied by 
a component of $v$ 
is equivalent to
the equation
satisfied by 
\eqref{prod:EHKOSYequiv} with \eqref{cond:EHKOSY}
by suitable change of parameters.
For example, 
the equation satisfied by the first component of $v$ 
agrees with
the equation satisfied by \eqref{prod:EHKOSYequiv}
with $\alpha_{1}$, $\alpha_{2}$ and $\gamma^{\prime}$
replaced by
$\alpha_{1}+1$, $\alpha_{2}+1$ and $\gamma^{\prime}+1$,
respectively.

\appendix

\section{The Dotsenko-Fateev system}
Ebisu et~al.~\cite{EHKOSY19}
have shown that 
the Dotsenko-Fateev equation
is obtained 
from the equation for \eqref{prod:EHKOSY}
by the middle convolution and the addition.

In this appendix we shall show that
the Euler transformation of a certain order
of the system \eqref{sys:v} 
gives the Dotsenko-Fateev system.
It is a size three system of the form
\begin{equation}
\frac{dz}{dx}
=
\left(
\frac{1}{x}
C_{0}
+
\frac{1}{x-1}
C_{1}
\right)
z,
\label{sys:DF}
\end{equation}
where
\begin{equation*}
C_{0}
=
\begin{pmatrix}
 2a+2c+g & 0 & b \\
 0 & 0 & 0 \\
 0 & 2b+g & a+c \\
\end{pmatrix},
\quad
C_{1}
=
\begin{pmatrix}
 0 & 0 & 0 \\
 0 & 2b+2c+g & a \\
 2a+g & 0 & b+c \\
\end{pmatrix}
\end{equation*}
(see Haraoka~\cite{Har13}).
For later use, we remark that
\begin{subequations}
\begin{alignat}{2}
C_{0}-(a+c)I_{3}
&
=
\begin{pmatrix}
 a+c+g & 0 & b \\
 0 & -a-c & 0 \\
 0 & 2b+g & 0 \\
\end{pmatrix}
&&\sim
\begin{pmatrix}
 a+c+g &   &   \\
   & -a-c &   \\
   &  & 0 \\
\end{pmatrix},
\label{mat:simC0C1a}
\\
C_{1}-(b+c)I_{3}
&
=
\begin{pmatrix}
 -b-c & 0 & 0 \\
 0 & b+c+g & a \\
 2a+g & 0 & 0 \\
\end{pmatrix}
&&\sim
\begin{pmatrix}
 b+c+g &   &   \\
   & -b-c &   \\
   &  & 0 \\
\end{pmatrix}
\label{mat:simC0C1b}
\end{alignat}
and
\begin{align}
&
C_{0}+C_{1}-(a+b+2c)I_{3}
\notag
\\
&
=
\begin{pmatrix}
 a-b+g & 0 & b \\
 0 & b-a+g & a \\
 2a+g & 2b+g & 0 \\
\end{pmatrix}
\sim
\begin{pmatrix}
 g &  &  \\
  & a+b+g   \\
  &  & -a-b \\
\end{pmatrix}.
\label{mat:simC0C1c}
\end{align}
\end{subequations}

Note that we can write the system \eqref{sys:v} in the form 
\begin{equation}
( x I_{4} - RTR^{-1} )
\frac{dv}{dx}
=
\varLambda v,
\label{sys:vdash}
\end{equation}
where
$ 
\varLambda
= 
\begin{pmatrix}
\lambda_{++--} J &                \\
              & \lambda_{+--+} J  \\
\end{pmatrix} 
$. 
The Euler transformation
\begin{equation}
v_{\mu}(x)
=\int_{C}(t-x)^{\mu-1}v(t)\,dt,
\label{trans:Eulerv}
\end{equation}
where $C$ is an appropriate path of integration,
transforms
the system \eqref{sys:vdash} into
\begin{equation*}
( x I_{4} - RTR^{-1} )
\frac{dv_{\mu}}{dx}
=
( \varLambda + \mu I_{4} ) v_{\mu},
\end{equation*}
which
is written in the form
\begin{equation}
\frac{dv_{\mu}}{dx}
=R( x I_{4} - T )^{-1}R^{-1}(\varLambda+\mu I_{4})v_{\mu}.
\label{sys:vlambda}
\end{equation}
If $\mu=\lambda_{--++}$,
then
\begin{equation*}
\varLambda+\mu I_{4}
=\varLambda+\lambda_{--++} I_{4}
=\begin{pmatrix}
0 &   &   &   \\
  & 2\lambda_{--++} &   &   \\
  &   & 2\lambda_{0-0+} &   \\
  &   &   & 2\lambda_{-0+0} \\
\end{pmatrix},
\end{equation*}
and hence
the system \eqref{sys:vlambda} is reducible.
We write
\begin{equation}
R(xI_{4}-T)^{-1}R^{-1}(\varLambda+\lambda_{--++} I_{4})
=
\frac{1}{x}
\begin{pmatrix}
 0 & \ast \\
 0 & K_{0}
\end{pmatrix}
+
\frac{1}{x-1}
\begin{pmatrix}
 0 & \ast \\
 0 & K_{1}
\end{pmatrix},
\label{def:K0K1}
\end{equation}
where $K_{0}$, $K_{1}$ are $3\times3$ matrices,
and
\begin{equation*}
v_{\lambda_{--++}}
=
\begin{pmatrix}
 \ast \\ \tilde{v}
\end{pmatrix},
\end{equation*}
where $\tilde{v}$ is a $3$-dimensional vector.
Then $\tilde{v}$ satisfies
the system
\begin{equation}
\frac{d\tilde{v}}{dx}
=
\left(
\frac{1}{x}
K_{0}
+
\frac{1}{x-1}
K_{1}
\right)
\tilde{v}.
\label{sys:vtilde}
\end{equation}
Since the matrices in the right hand side of \eqref{def:K0K1}
satisfy
\begin{align*}
\begin{pmatrix}
 0 & \ast \\
 0 & K_{0}
\end{pmatrix}
&=
R \begin{pmatrix}
 I_{2} &  \\
   & O
\end{pmatrix} (A+\lambda_{--++} I_{4}) R^{-1}
\\
&
=
R \begin{pmatrix}
 \lambda_{++++}J+\lambda_{--++} I_{2}  &  \ast \\
   & O
\end{pmatrix} R^{-1},
\\
\begin{pmatrix}
 0 & \ast \\
 0 & K_{1}
\end{pmatrix}
&=
R \begin{pmatrix}
  O &  \\
   & I_{2}
\end{pmatrix} (A+\lambda_{--++} I_{4}) R^{-1}
\\
&
=
R \begin{pmatrix}
  O &  \\
 \ast & \lambda_{-+-+}J+\lambda_{--++} I_{2}
\end{pmatrix} R^{-1}
\end{align*}
and
\begin{equation*}
\begin{pmatrix}
 0 & \ast \\
 0 & K_{0}
\end{pmatrix}
+
\begin{pmatrix}
 0 & \ast \\
 0 & K_{1}
\end{pmatrix}
=
R (A+\lambda_{--++} I_{4}) R^{-1}
=
\varLambda+\lambda_{--++} I_{4},
\end{equation*}
we have
\begin{align*}
K_{0}
\sim
\begin{pmatrix}
 \lambda_{++++}J+\lambda_{--++} I_{2}  &   \\
     & 0 \\
\end{pmatrix}
&=
\begin{pmatrix}
\beta_{1}+\beta_{2} & & \\
 & -\alpha_{1}-\alpha_{2} & \\
   &   & 0 \\
\end{pmatrix},
\\
K_{1}
\sim
\begin{pmatrix}
 \lambda_{-+-+}J+\lambda_{--++} I_{2}  &   \\
     & 0 \\
\end{pmatrix}
&=
\begin{pmatrix}
-\alpha_{1}+\beta_{2} & & \\
 & -\alpha_{2}+\beta_{1} & \\
   &   & 0 \\
\end{pmatrix}
\end{align*}
and
\begin{align*}
K_{0}+K_{1}
&=
\begin{pmatrix}
 2\lambda_{--++} &   &   \\
   & 2\lambda_{0-0+} &   \\
   &   & 2\lambda_{-0+0} \\
\end{pmatrix}
\\
&
=
\begin{pmatrix}
-\alpha_{1}-\alpha_{2}+\beta_{1}+\beta_{2} & & \\
 & -\alpha_{2}+\beta_{2} & \\
 & & -\alpha_{1}+\beta_{1} \\
\end{pmatrix}.
\end{align*}
Comparing these with 
\eqref{mat:simC0C1a}--\eqref{mat:simC0C1c},
we set
\begin{equation}
\alpha_{1}=a, \quad
\alpha_{2}=c, \quad
\beta_{1}=-b, \quad
\beta_{2}=a+b+c+g.
\label{cond:setting}
\end{equation}

\begin{theorem}
Under the situation \eqref{cond:setting},
the change of variable
\begin{equation}
z=x^{2\lambda_{++00}}(x-1)^{2\lambda_{0+-0}}\tilde{Q}\tilde{v},
\label{change:tildev2z}
\end{equation}
where
\begin{equation*}
\tilde{Q}
=
\begin{pmatrix}
   2\lambda_{00+0}
 & \frac{2\lambda_{+000}\lambda_{0+0-}}{\lambda_{+--+}}
 & -\frac{2\lambda_{00+0}\lambda_{+0-0}\lambda_{+++-}}
         {\lambda_{+--+}\lambda_{+-++}}
 \\[\jot]
   2\lambda_{+000}
 & \frac{2\lambda_{+000}\lambda_{0+0-}}{\lambda_{+--+}}
 & -\frac{2\lambda_{+000}\lambda_{+0-0}}
         {\lambda_{+--+}}
 \\[\jot]
   2\lambda_{+0+0}
 & \frac{4\lambda_{+000}\lambda_{0+0-}}{\lambda_{+--+}}
 & -\frac{2\lambda_{+0-0}\lambda_{+++-}}{\lambda_{+--+}}
\end{pmatrix},
\end{equation*}
transforms \eqref{sys:vtilde} into \eqref{sys:DF}.
\end{theorem}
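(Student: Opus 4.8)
\textit{Proof sketch.}
The plan is to reduce the assertion to two matrix identities and then verify them. Put $L(x)=x^{2\lambda_{++00}}(x-1)^{2\lambda_{0+-0}}$, so that the substitution \eqref{change:tildev2z} reads $z=L(x)\tilde{Q}\tilde{v}$. Differentiating and using $L'(x)/L(x)=2\lambda_{++00}/x+2\lambda_{0+-0}/(x-1)$ together with \eqref{sys:vtilde}, one finds that $z$ satisfies the Fuchsian system
\[
\frac{dz}{dx}
=
\left(
\frac{1}{x}\bigl(2\lambda_{++00}I_3+\tilde{Q}K_0\tilde{Q}^{-1}\bigr)
+
\frac{1}{x-1}\bigl(2\lambda_{0+-0}I_3+\tilde{Q}K_1\tilde{Q}^{-1}\bigr)
\right)z.
\]
Under \eqref{cond:setting} one has $2\lambda_{++00}=\alpha_1+\alpha_2=a+c$ and $2\lambda_{0+-0}=\alpha_2-\beta_1=b+c$, so it suffices to prove the two identities $\tilde{Q}K_0=(C_0-(a+c)I_3)\tilde{Q}$ and $\tilde{Q}K_1=(C_1-(b+c)I_3)\tilde{Q}$, which avoid inverting $\tilde{Q}$.

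Next I would make $K_0$ and $K_1$ explicit. By the block identities recorded just before \eqref{sys:vtilde}, the matrix $\begin{pmatrix}0&\ast\\0&K_0\end{pmatrix}$ is the lower-right $3\times3$ corner of $R\begin{pmatrix}I_2&\\&O\end{pmatrix}(A+\lambda_{--++}I_4)R^{-1}$ and $\begin{pmatrix}0&\ast\\0&K_1\end{pmatrix}$ is that of $R\begin{pmatrix}O&\\&I_2\end{pmatrix}(A+\lambda_{--++}I_4)R^{-1}$, where $A$ is the Okubo coefficient matrix $A_0$ of \eqref{sys:zero} and $R$, $R^{-1}$ are the explicit matrices introduced before \eqref{sys:v}. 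Carrying out these products, extracting the $3\times3$ blocks, and expressing every $\lambda$ in terms of $a,b,c,g$ through $\alpha_1=a$, $\alpha_2=c$, $\beta_1=-b$, $\beta_2=a+b+c+g$ yields $K_0$ and $K_1$ as explicit $3\times3$ matrices in $a,b,c,g$. Since $C_0$, $C_1$ and $\tilde{Q}$ are also explicit, the two displayed identities then become finite polynomial identities in $a,b,c,g$ after clearing the denominators occurring in $\tilde{Q}$ — which are nonzero under \eqref{cond:base} — and can be checked entry by entry, a computation best handled with a computer algebra system as elsewhere in the paper; invertibility of $\tilde{Q}$ can be confirmed at the same time by computing $\det\tilde{Q}$.

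Finally, it is worth recording a consistency check that also explains why such a $\tilde{Q}$ must exist. Comparing \eqref{mat:simC0C1a}--\eqref{mat:simC0C1c} with the eigenvalue computations for $K_0$, $K_1$ and $K_0+K_1$ already carried out in the appendix, one sees that under \eqref{cond:setting} the matrices $C_0-(a+c)I_3$, $C_1-(b+c)I_3$ and $C_0+C_1-(a+b+2c)I_3$ are similar to $K_0$, $K_1$ and $K_0+K_1$ respectively; both \eqref{sys:vtilde} and \eqref{sys:DF} are $3\times3$ Fuchsian systems on the three punctures $0,1,\infty$ with distinct local exponents, hence of spectral type $((111),(111),(111))$ and index of rigidity $0$, and are irreducible for the parameters under consideration, so by rigidity (see Haraoka \cite{Har20}) they are conjugate by some gauge matrix — the content of the theorem being that this matrix is exactly the stated $\tilde{Q}$. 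The only real obstacle is therefore the bookkeeping in producing $K_0$ and $K_1$ from \eqref{def:K0K1} and verifying the two matrix identities after the specialization \eqref{cond:setting}; no conceptual difficulty remains.
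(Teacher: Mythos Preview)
Your main reduction is correct and mirrors the paper: differentiating \eqref{change:tildev2z} and comparing residues reduces the claim to the two conjugation identities $\tilde{Q}K_j\tilde{Q}^{-1}=C_j-\bigl((a+c)\delta_{j0}+(b+c)\delta_{j1}\bigr)I_3$, which the paper likewise verifies by direct computation. The paper organizes the check slightly differently: it first builds a one-parameter family $Q$ of right-eigenvector matrices for $C_0+C_1-(a+b+2c)I_3$ (columns indexed by the eigenvalues $g$, $a+b+g$, $-(a+b)$, with free scalars $p,q,r$), observes that this automatically diagonalizes the sum to $K_0+K_1$, and then shows that imposing the two \emph{separate} identities for $C_0$ and $C_1$ pins down $q,r$ in terms of $p$; the resulting $Q$ with $p=1$, rewritten via \eqref{cond:setting}, is exactly $\tilde{Q}$. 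This gives a conceptual origin for $\tilde{Q}$ that your brute-force verification does not, but the computational content is the same.

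Your final ``consistency check'' is wrong and should be dropped. You correctly compute that the spectral type of both $3\times3$ systems is $((111),(111),(111))$ with index of rigidity $0$, and then invoke rigidity to conclude they must be conjugate. But rigidity means index $2$; index $0$ is precisely the non-rigid case, with a positive-dimensional moduli of accessory parameters, so matching local exponents alone does \emph{not} force the two systems to be gauge equivalent. (This is the whole theme of the paper for the ambient size-four system.) The existence of $\tilde{Q}$ is therefore not an a priori consequence of the local data; it is a genuine fact about the specific $K_0,K_1$ produced by \eqref{def:K0K1}, and only the direct computation --- yours or the paper's eigenvector construction --- establishes it.
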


\begin{proof}
For $C_{0}+C_{1}-(a+b+2c)I_{3}$, we take 
\begin{equation}
Q=
\begin{pmatrix}
-bp & q & -b(2b+g)r \\
ap & q & -a(2a+g)r \\
(a-b)p & 2q & (2a+g)(2b+g)r
\end{pmatrix},
\label{mat:Qabg}
\end{equation}
where $p$, $q$, $r$ are non-zero constants, 
that consists of right eigenvectors of the matrix to satisfy
\begin{equation*}
Q^{-1}(C_{0}+C_{1}-(a+b+2c)I_{3})Q
=
\begin{pmatrix}
 g &  &  \\
  & a+b+g &  \\
  &  & -(a+b) \\
\end{pmatrix}.
\end{equation*}
This means that
\begin{equation*}
Q^{-1}(C_{0}+C_{1}-(a+b+2c)I_{3})Q=K_{0}+K_{1}
\end{equation*}
in the case \eqref{cond:setting}.
Moreover,
by direct calculation we can verify that
\begin{align*}
Q^{-1}(C_{0}-(a+c)I_{3})Q&=K_{0},
\\
Q^{-1}(C_{1}-(b+c)I_{3})Q&=K_{1}
\end{align*}
hold separately if
\begin{equation}
q=-\frac{a(a+b+g)}{2a+2b+g}p, \quad
r=\frac{a+b}{(2a+g)(2a+2b+g)}p.
\label{value:pqr}
\end{equation}

The matrix $\tilde{Q}$ is nothing but
\eqref{mat:Qabg} with \eqref{value:pqr} and $p=1$
represented by
$\alpha_{1}$, $\alpha_{2}$, $\beta_{1}$, $\beta_{2}$
instead of
$a$, $b$, $c$, $g$.
By the transformation
\eqref{change:tildev2z},
we have
\begin{align*}
\frac{dz}{dx}
&=
x^{2\lambda_{++00}}(x-1)^{2\lambda_{0+-0}}
\left(\frac{2\lambda_{++00}}{x}+\frac{2\lambda_{0+-0}}{x-1}\right)
\tilde{Q}\tilde{v}
\\
&\phantom{{}={}}{}
+
x^{2\lambda_{++00}}(x-1)^{2\lambda_{0+-0}}\tilde{Q}
\left(\frac{1}{x}K_{0}+\frac{1}{x-1}K_{1}\right)
\tilde{v}
\\
&=
\left\{
\left(\frac{a+c}{x}+\frac{b+c}{x-1}\right)I_{3}
+\tilde{Q}\left(\frac{1}{x}K_{0}+\frac{1}{x-1}K_{1}\right)\tilde{Q}^{-1}
\right\}
z
\\
&=
\left(\frac{1}{x}C_{0}+\frac{1}{x-1}C_{1}\right)
z.
\end{align*}
This completes the proof.
\end{proof}

Following the process stated above,
we can construct a solution of \eqref{sys:DF}.

\begin{theorem}
The system \eqref{sys:DF} has a solution of the form
\begin{align*}
z
&=
x^{a+c}(x-1)^{b+c}
\begin{pmatrix}
   -b
 & -\frac{a(a+b+g)}{2a+2b+g}
 & -\frac{b(a+b)(2b+g)}{(2a+g)(2a+2b+g)}
 \\[\jot]
   a
 & -\frac{a(a+b+g)}{2a+2b+g}
 & -\frac{a(a+b)}{2a+2b+g}
 \\[\jot]
  a-b
 & -\frac{2a(a+b+g)}{2a+2b+g}
 & \frac{(a+b)(2b+g)}{2a+2b+g}
\end{pmatrix}
\\
&\phantom{{}={}}
\cdot
\begin{pmatrix}
\displaystyle
\int_{0}^{x}
    \begin{array}[t]{@{}r@{}l@{}}
    (t-x)^{\frac{g}{2}-1}t^{a+c+\frac{g}{2}} &
    {}_{2}F_{1}(a,-b+1,a+c+\frac{g}{2}+1;t) \\
    \times &
    {}_{2}F_{1}(c,a+b+c+g+1,a+c+\frac{g}{2}+1;t)\,dt
    \end{array}
\\
\displaystyle
\int_{0}^{x}
    \begin{array}[t]{@{}r@{}l@{}}
    (t-x)^{\frac{g}{2}-1}t^{a+c+\frac{g}{2}} &
    {}_{2}F_{1}(a+1,-b,a+c+\frac{g}{2}+1;t) \\
    \times &
    {}_{2}F_{1}(c,a+b+c+g+1,a+c+\frac{g}{2}+1;t)\,dt
    \end{array}
\\
\displaystyle
\int_{0}^{x}
    \begin{array}[t]{@{}r@{}l@{}}
    (t-x)^{\frac{g}{2}-1}t^{a+c+\frac{g}{2}} &
    {}_{2}F_{1}(a,-b+1,a+c+\frac{g}{2}+1;t) \\
    \times &
    {}_{2}F_{1}(c+1,a+b+c+g,a+c+\frac{g}{2}+1;t)\,dt
    \end{array}
\end{pmatrix},
\end{align*}
where we assume that
$\Re(\frac{g}{2})>0$ and $\Re(a+c+\frac{g}{2}+1)>0$.
\end{theorem}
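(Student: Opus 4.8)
The plan is to push the solution \eqref{sol:v} of \eqref{sys:vdash} through the three steps assembled above --- the Euler transformation \eqref{trans:Eulerv}, the reduction to \eqref{sys:vtilde}, and the change of variable \eqref{change:tildev2z} --- each of which has already been shown to send solutions to solutions, and then to rewrite the result in the variables $a,b,c,g$ by means of \eqref{cond:setting} with a definite choice of integration path.

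First, recall from the preceding subsection that \eqref{sol:v} is a solution of \eqref{sys:v}, hence of \eqref{sys:vdash}, and that under \eqref{cond:setting} one has $\gamma_{1}-1=a+c+\tfrac{g}{2}$ and $\mu:=\lambda_{--++}=\tfrac{g}{2}$, while for $j=2,3,4$ the $j$th component of \eqref{sol:v} is $x^{\gamma_{1}-1}$ times the product of two Gauss hypergeometric functions whose parameter triples are exactly those of the $(j-1)$st integrand in the theorem. Next, choose the path in \eqref{trans:Eulerv} to be the segment $C=[0,x]$; then the Euler transform of \eqref{sol:v} is a solution $v_{\mu}$ of \eqref{sys:vlambda}. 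The hypotheses $\Re(\tfrac{g}{2})>0$ and $\Re(a+c+\tfrac{g}{2}+1)>0$ are precisely the conditions that make the integrals converge --- $(t-x)^{\mu-1}$ integrable at $t=x$ and $t^{\gamma_{1}-1}$ integrable at $t=0$ --- and that render $C=[0,x]$ an admissible path for the Euler transformation, so that the boundary terms at both endpoints drop out. Setting $\mu=\lambda_{--++}$ makes \eqref{sys:vlambda} reducible exactly as in \eqref{def:K0K1}, so the last three entries $\tilde{v}$ of $v_{\lambda_{--++}}$ satisfy \eqref{sys:vtilde}; by construction these entries are the Euler transforms along $[0,x]$ of the second, third and fourth components of \eqref{sol:v}, which under \eqref{cond:setting} are the three integrals in the theorem, their common factor being $(t-x)^{\frac{g}{2}-1}t^{a+c+\frac{g}{2}}$.

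Finally, apply \eqref{change:tildev2z}: by the preceding theorem this transforms \eqref{sys:vtilde} into \eqref{sys:DF}, so $z=x^{2\lambda_{++00}}(x-1)^{2\lambda_{0+-0}}\tilde{Q}\tilde{v}$ is a solution of \eqref{sys:DF}. To recognise this as the stated formula, substitute \eqref{cond:setting}: one gets $2\lambda_{++00}=a+c$ and $2\lambda_{0+-0}=b+c$, so the scalar factor is $x^{a+c}(x-1)^{b+c}$; and, as already noted just after \eqref{value:pqr}, $\tilde{Q}$ written in $a,b,c,g$ is the matrix \eqref{mat:Qabg} with \eqref{value:pqr} and $p=1$, which is exactly the $3\times3$ matrix displayed in the theorem. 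Combining the three pieces yields the asserted $z$.

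The only real work lies in the verifications surrounding this last substitution --- confirming that each $\lambda$-entry of $\tilde{Q}$ and each hypergeometric parameter coming out of \eqref{sol:v} collapses, under \eqref{cond:setting}, to the indicated rational function of $a,b,c,g$ --- together with the one genuinely delicate point, namely checking that with $C=[0,x]$ the boundary contributions arising in the Euler-transform computation of \eqref{trans:Eulerv} really do vanish throughout the range $\Re(\tfrac{g}{2})>0$, $\Re(a+c+\tfrac{g}{2}+1)>0$. It is there that the two analytic hypotheses of the theorem are used.
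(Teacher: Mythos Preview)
Your proposal is correct and follows essentially the same route as the paper's proof: both assemble the solution by composing the already-established chain \eqref{sol:v} $\to$ Euler transform \eqref{trans:Eulerv} $\to$ reduction to \eqref{sys:vtilde} $\to$ change of variable \eqref{change:tildev2z}, and then substitute \eqref{cond:setting} to rewrite everything in $a,b,c,g$. Your version is simply more explicit than the paper's --- you spell out the choice of path $C=[0,x]$, the role of the two analytic hypotheses in making that path admissible, and the identifications $\gamma_{1}-1=a+c+\tfrac{g}{2}$, $\mu=\tfrac{g}{2}$, $2\lambda_{++00}=a+c$, $2\lambda_{0+-0}=b+c$ --- whereas the paper records only $\gamma$, $\mu$, and $\tilde{Q}$ in the new variables and then says ``combining these \dots\ we obtain the solution above.''
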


\begin{proof}
Under the situation \eqref{cond:setting},
we have
\begin{align*}
\gamma
&=
\frac12(\alpha_{1}+\alpha_{2}+\beta_{1}+\beta_{2})+1
=
a+c+\frac{g}{2}+1,
\\
\mu
&=
-\lambda_{++--}
=
\frac{g}{2},
\\
\tilde{Q}
&=
\begin{pmatrix}
   -b
 & -\frac{a(a+b+g)}{2a+2b+g}
 & -\frac{b(a+b)(2b+g)}{(2a+g)(2a+2b+g)}
 \\[\jot]
   a
 & -\frac{a(a+b+g)}{2a+2b+g}
 & -\frac{a(a+b)}{2a+2b+g}
 \\[\jot]
  a-b
 & -\frac{2a(a+b+g)}{2a+2b+g}
 & \frac{(a+b)(2b+g)}{2a+2b+g}
\end{pmatrix}.
\end{align*}
Combining these with
\eqref{sol:v},
\eqref{trans:Eulerv} and
\eqref{change:tildev2z},
we obtain the solution above.
\end{proof}

\section*{Acknowledgements}
The author wishes to express his thanks to
Akihito Ebisu for several helpful comments.

\end{document}